\let\oldbibliography\thebibliography
\renewcommand{\thebibliography}[1]{%
  \oldbibliography{#1}%
  \setlength{\itemsep}{-1.2mm}%
}
\def\R{\mathbb{R}}
\def\cK{\mathcal{K}}
\def\cM{\mathcal{M}}
\def\txtd{{\textnormal{d}}}
\def\txte{{\textnormal{e}}}
\def\txts{{\textnormal{s}}}
\def\txtu{{\textnormal{u}}}
\def\txtD{{\textnormal{D}}}
\newcommand{\myendex}{$\blacklozenge$\end{ex}}
\newcommand{\myendexerc}{$\lozenge$\end{exerc}}
\newcommand{\myendpexerc}{$\lozenge$\end{pexerc}}
\numberwithin{equation}{section}
\newtheorem{theorem}{Theorem}[section]
\newtheorem{lemma}[theorem]{Lemma}
\newtheorem{proposition}[theorem]{Proposition}
\newtheorem{corollary}[theorem]{Corollary}
\newcommand{\rmd}{\mathrm{d}}
\newcommand{\rmD}{\mathrm{D}}
\newcommand{\nwc}{\newcommand}
\nwc{\red}[1]{\textcolor{red}{#1}}
\nwc{\blue}[1]{\textcolor{blue}{#1}}
\nwc{\green}[1]{\textcolor{green}{#1}}
\nwc{\ddt}{\frac{\text{d}}{\text{dt}}}
\nwc{\dds}{\frac{\text{d}}{\text{ds}}}
\let\epsilon=\varepsilon
\nwc{\B}{\mathcal{B}}
\nwc{\F}{\mathcal{F}}
\nwc{\G}{\mathcal{G}}
\author{ Luca Arcidiacono\thanks{Department of Mathematics, Technical University of
		Munich, Boltzmannstr.~3, Garching bei München, \mbox{D-85748}, Germany}~~
	and~~Christian Kuehn\footnotemark[1]
}
\title{Blowing-up Nonautonomous Vector Fields:\\ Infinite Delay Equations and Invariant Manifolds}
\date{\today}
\begin{document}
\maketitle

\begin{abstract}
\noindent We show the existence of nonautonomous invariant manifolds for planar, asymptotically autonomous differential equations, that have equilibrium solutions with zero Lyapunov spectrum. These invariant manifolds correspond to the stable and unstable manifold of a desingularized equation, that we obtain by using the blow-up method. More precisely, the blow-up method is extended to the nonautonomous setting and transforms the original finite-dimensional ordinary differential equation into an infinite-dimensional delay equation with infinite delay, but hyperbolic structure. In the technical construction of the invariant manifolds for the delay equation, we have to carefully study the effect of the time reparametrization used for desingularization in the blown-up space to guarantee sufficient regularity. This allows us to employ a Lyapunov-Perron argument to obtain existence of an invariant manifold. We combine the last step with an implicit function argument to study differentiability of the manifold. Finally, we reverse the blow-up transformation of space and time variables obtaining invariant manifolds of the initially considered equation.
\end{abstract}

\section{Introduction}
\label{sec:introduction}

We consider an nonautonomous planar ordinary differential equation (ODE) of the form
\begin{equation}
\label{eq:intro1}
	\frac{\rmd x}{\rmd t}=x'=F(x) + G(x)h(t)\,,\qquad x=x(t)\in\R^2, 
\end{equation}
with \(F, G: \R^2 \to \R^2\) and a function \(h:\R \to \R\) that decays exponentially towards zero, i.e., the system is asymptotically autonomous. We assume that the origin is an equilibrium solution with Lyapunov spectrum consisting only of zero; in particular, the equilibrium of the limit-equation \(x' = F(x) \) is  not hyperbolic.  Due to the asymptotic behaviour of equation~\eqref{eq:intro1} the Lyapunov spectrum coincides with the real part of the spectrum of the linearization of \(F\) at the origin, i.e., we assume that the eigenvalues of the Jacobian \(\rmD F(0)\) lie on the imaginary axis. 

For autonomous differential equations the blowup technique has turned out to be a powerful method to desingularize vector fields around non-hyperbolic equilibria. Originating in algebraic geometry to analyze singularities of algebraic varieties~\cite{Hironaka1,Hironaka2}, this technique was later on transferred and extended to autonomous ODEs; see \cite{Dumortier1} for an introduction to the method. A main classical result in the area is that one can always desingularize planar autonomous ODEs with non-hyperbolic equilibria~\cite{Dum77,Dumortier}. This result has been extended to autonomous ODEs in three dimension~\cite{Panazzolo2} relatively recently. Yet, even in higher-dimensional situations, the blowup is extremely helpful in many cases, e.g., for studying the loss of normal hyperbolicity in fast-slow dynamical systems~\cite{DumortierRoussarie,Krupa2001}; see also~\cite{KuehnBook}. The blowup method uses a non-injective transformation that maps a higher-dimensional object, such as a sphere or a line, onto the non-hyperbolic equilibrium constituting the singularity. The dynamics on this larger, blown-up version of the singularity may then be desingularized and exhibit (partially) hyperbolic behaviour. The desingularization step involves a suitable time re-parametrization depending on the degeneracy of the vector field. For autonomous ODEs, this time reparametrization does not pose a problem. However, it seems at first sight to prevent the use of blow-up techniques for general classes of nonautonomous dynamical systems~\cite{KloedenRasmussen,Poetzsche2}. 

Here we use a different viewpoint to prove that geometric desingularization via blow-up is still possible. We observe that applying a time reparametrization to equation~\eqref{eq:intro1} does not lead to another nonautonomous differential equation but to a delay differential equation~\cite{Hale3,Diekmannetal}. This provides a link between two previously disparate areas: the geometric desingularization of singularities for finite-dimensional dynamical systems and the theory of infinite-dimensional delay/functional differential equations. To be more precise on the structure of the delay, we are going to see that the derivative of the solution of the desingularized equation at any time \(t\geq t_0\) depends on the whole history of the trajectory on the interval \([t_0, t]\), where \(t_0\) is the fixed point in time that stays invariant under the time change. Hence, the delay becomes infinitely large as \(t\to \infty\). Furthermore, the vector field obtained after the desingularization is singular in the spatial limit \(x \to 0\), so that assumptions on \(h\) are necessary to extend it properly to the pre-image of the equilibrium point under the blowup map. One may then check that the desingularized delay equation now possesses a hyperbolic equilibrium solution, yet the singularity as \(x \to 0\) makes it more complicated to analyze. To show the true utility of the blow-up, we prove the existence of stable and unstable manifolds in the vicinity of the equilibrium for the delay equation. A classical technique to show existence of stable and unstable manifold is the Lyapunov-Perron method, that defines the manifold as solution sets of a fixed point integral equation. It was already used for autonomous~\cite{Perron1929}, nonautonomous~\cite{Morel2008,KloedenRasmussen} or delay~\cite{Mura07} equations and is also the method of choice for our situation. To define the Lyapunov Perron operator we utilize a variation of constants formula presented in \cite{Hino2002} that covers delay equations with infinite delay. The resulting stable/unstable manifolds correspond, after a suitable time evaluation and blow-down, to invariant manifolds of the originally considered equation~\eqref{eq:intro1}. This allows to draw conclusions about the dynamics in the vicinity of a non-hyperbolic singularity (in the sense of Lyapunov exponents), which completes the key rigorous proof-of-principle that blow-up techniques can be applied to nonautonomous ODEs. \medskip 

Let us briefly outline the structure of paper. We start by giving a short introduction to the blowup method as well as time reparametrization and state some basic concepts of stability for linear and nonlinear autonomous and nonautonomous differential equations in Section~\ref{sec:background}. Next, in Section~\ref{sec:setting_main_result} we provide all relevant assumptions and definitions allowing us to formulate Theorem~\ref{thm:stable_mfld} on the existence of a stable manifold and its properties. Furthermore, we explain how these lead to invariant manifolds of the original equation. Section~\ref{sec:preliminaries} covers some technical tools that are used for the proof of Theorem~\ref{thm:stable_mfld}, which is presented in Section~\ref{sec:proof}. The first part of Section~\ref{sec:proof} is dedicated to the regularity properties of the considered vector field while the latter part then utilizes all the above mentioned ideas to show existence of the stable manifold. Finally, we give a short outlook, which generalizations of our nonautonomous blowup are possible next challenges.

\section{Background}\label{sec:background}

\subsection{Blowup of Autonomous Differential Equations}
\label{sec:blowup_autonomous}

The blowup method originated in algebraic geometry and was later on modified and extended to desingularize non-hyperbolic equilibria of ordinary differential equations; see e.g.~\cite{Dum77,Dumortier1,KuehnBook}. It uses a non-injective blowup transformation \(\Phi\colon \cM \mapsto \R^n \) on some manifold \(\cM\) that maps a higher-dimensional object, as for example a sphere, onto the equilibrium point. For illustration, let us consider the example
\begin{align}\label{eq:example1}
\begin{split}
	x'&=x^2-2xy \,,\\
	y'&=y^2-2xy \,.
	\end{split}
\end{align}
Here the origin is non-hyperbolic as its linearization completely vanishes. The easiest geometric example of a blowup map is to use polar coordinates, i.e., $\Phi : \mathbb{S}^1 \times I \to \R^2, ~ (\theta, r) \mapsto (r \cos(\theta), r \sin(\theta))$. This change of coordinates induces (via pullback of the vector field above under \(\Phi\))  a new flow given by 
\begin{align}\label{eq:example2}
\begin{split}
\theta'&= r\cdot a(r, \theta)\,, \\
r'&= r^2 \!\cdot b(r, \theta)\,,
\end{split}
\end{align}
where $a(r, \theta)$, $b(r, \theta)$ are explicitly computable, yet quite involved and contain several trigonometric functions. Note that there is still no movement on the set \(\mathbb{S}^1 \times \{0\} = \Phi^{-1}(\{(0,0)\})\) for the transformed ODE~\eqref{eq:example2}. It is clear that using polar coordinates, the two induced flows will be smoothly conjugate outside of the origin $(0,0)$. Yet, note that we have acquired a new degree of freedom as $\mathbb{S}^1$ is an entire circle of equilibria. In particular, we obtain an equivalent set of equations 
 \begin{align}
\label{eq:example3}
\begin{split}
 \theta'&= ~a(r, \theta)  \,, \\ 
 r'&=r\cdot b(r,\theta) \,,
 \end{split}
 \end{align}
which is  more regular,  by dividing out the common factor \(r^k\), see also Section~\ref{sec:time_reparametrization} for more details on this equivalence. In general, such a desingularized version \(\hat{F}(\theta, r)= \frac{1}{r^k}F(\theta, r)\) of the vector field can be given if its \(k\)-jet vanishes. The desingularized vector field extends smoothly onto the circle \(\mathbb{S}^1 \times \{0\}\) and shows new equilibrium points on that circle, which is still an invariant set. In contrast to the original vector field, the newly gained equilibria have increased hyperbolicity. Now typical linear theory may be applied to those equilibria and results can be transferred back to the original vector field by using \(\Phi\).
 
In the calculations of the blow-up it turns out to be helpful to avoid explicit trigonometric calculations. Instead, one uses directional blowup maps -- the pre-image of the origin is now one of the axes -- which can be seen as a concatenation of the previous polar version with a chart map/projection of the circle. It might be necessary to consider multiple directional blowup maps, just as it is necessary to use multiple charts to cover the circle. A directional blowup utilizes a coordinate transformation of the form \((x,y)=(u,uv)\). In view of our example \eqref{eq:example1} above this yields
\vspace{-0.2cm}
\begin{align*}
	u' &= u^2(1-2v)\,, \\
	v' &= 3uv(v-1) \,.
\end{align*}
Again a common factor may be eliminated to resolve the singular structure. The desingularized equation
\vspace{-0.2cm}
\begin{align*}
u' &= u(1-2v)\,, \\
v' &= 3v(v-1)\,,
\end{align*}
has two hyperbolic saddle points at \((u,v) = (0,0)\) and \((u,v)=(0,\frac{1}{2})\) located on the invariant line \(\{u=0\}\). The directional blowup will be the type of blowup that we use in the rest of this paper due to its benefits of clear, easily computable equations. \medskip

A generalization of those blowups is given by the quasi-homogeneous blowup for polynomial vector fields, that can often simplify calculations even further. One uses weights on the polar transformation in different spatial directions and maps an ellipsoid onto the equilibrium point.  

\subsection{Time Reparametrization}
\label{sec:time_reparametrization}

Heuristically, equivalence of equations like \eqref{eq:example2} and \eqref{eq:example3} seems reasonable since at any point in the phase space the vector field has the same direction but only different length. Thus trajectories following the vector field should share the same orbit. This can be made precise by introducing a time reparametrization. \medskip

 Let \(U\subset \R^n\) be an open, invariant set of the general autonomous ODE  
\begin{equation}
\label{eq:ode1}
x'=f(x).
\end{equation}
Since we are only interested in local dynamics near an equilibrium, we assume without loss of generality that~\eqref{eq:ode1} generates a global-in-time flow. Let \(z: U \to \R^+\) be some positive function and consider the equation
\begin{equation}
\label{eq:ode2}
x'=f(x)z(x)\,.
\end{equation}
 One may construct the mapping that takes trajectories of \eqref{eq:ode1} to those of \eqref{eq:ode2} as follows: Take a trajectory \(\xi : \R \to \R^n\) of \eqref{eq:ode1} with initial condition  \(\xi(0)= \xi_0 \in U\) and define the function 
\[ \omega(s) =  \int_{0}^{s} \frac{1}{z(\xi(r))} \,\txtd r .\] 
Since \(z\) is positive, \(\omega : \R \to \R\) is strictly increasing, hence invertible on its range \(K\). Let us denote its inverse by \(\rho: K \to \R\). This inverse function \(\rho\) satisfies the identity
\[ \rho(t) = \int_{0}^{t} z(\xi (\rho(u)))\, du \] 
so that we have \(\rho'(t) = z(\xi(\rho(t)))\). Note further that for any trajectory we have \(\rho(0)=0\), i.e. the initial time 0 remains the same. To obtain the equivalence mapping between the two systems \eqref{eq:ode1} and \eqref{eq:ode2} we now rescale the time paths using \(\rho\). The new trajectories \(\psi(t) := \xi(\rho(t))\) satisfy due to the chain rule
\[ \ddt \psi(t) = \xi'(\rho(t)) \, \rho'(t) = f(\psi) z(\psi)\,, \] 
showing that trajectories of \eqref{eq:ode1} get mapped onto those of \eqref{eq:ode2} by the trajectory-wise time rescaling \(\rho\). Hence the two systems \eqref{eq:ode1} and \eqref{eq:ode2} are topologically equivalent. \medskip

\textit{Remark:} The special case of a constant function \(z\equiv c\) yields a ``classical'' rescaling of the time and \(\rho(t)=c\cdot t\) for any trajectory. However, note that in general we obtain a different rescaling for every trajectory.\medskip 

Besides during the desingularization of non-hyperbolic equilibria, a rescaling of differential equations is also useful for proving topological properties of trajectories or to complete a flow, where the time reparametrization yields an accessible way of an equivalence mapping, see also \cite{Chicone2006}.

\subsection{Stability in Autonomous and Nonautonomous Systems}
\label{sec:stability_auto_nonauto}

To determine the stability of an equilibrium point of an autonomous differential equation it is often sufficient to consider the linearization of the equation, i.e. the variational equation around this steady solution. In case of a hyperbolic equilibrium point, when there are no eigenvalues of the linearization on the imaginary axis, there is a splitting of the phase space into stable and unstable eigenspaces in which solutions decay with exponential rates according to the eigenvalue in forward and backward time respectively~\cite{GH}. Corresponding objects to the eigenspaces of the linear equation carry over to the nonlinear equation as stable and unstable manifolds. Theses can again be characterized as the solutions that converge in forward and backward time to the equilibrium point. The existence of stable and unstable manifolds allows to construct a homeomorphism that leads to the classical Hartman-Grobman Theorem~\cite{Chicone2006}:\bigskip

\textit{Assume the differential equation \(x' = f(x)\) possesses a hyperbolic equilibrium point \(x^*\). Then the linear equation \(\,X'=\textnormal{D}f(x_*)X\,\) is locally topologically equivalent to the nonlinear equation \(x' = f(x)\). } \medskip

These results are a main motivation for employing a blow-up to gain hyperbolicity. In particular, in the blown-up space, one may apply tools from linear analysis and try to prove the existence of stable/unstable manifolds, and then eventually transfer the results back to the original vector field by a blow down.\bigskip 

For nonautonomous differential equations $ x' = f(x,t)$, the eigenvalues, even along a steady solution, can vary as the vector field changes in time. One suitable generalization of the eigenvalues are Lyapunov exponents and correspondingly the Lyapunov spectrum \cite{KloedenRasmussen}.  We again consider the variational equation \(X'= \textnormal{D}f(x^*,t)X\) with corresponding flow map \(\Phi\) and define for \(\xi_0 \in \R^n\)
\[ \lambda_+^{\infty} (\xi_0) = \limsup_{t \to \infty} \frac{1}{t} \ln \|\Phi(t,0) \xi_0\|  \qquad \text{ and } \qquad  \lambda_-^{\infty} (\xi_0) = \liminf_{t \to \infty} \frac{1}{t} \ln \|\Phi(t,0) \xi_0\|   \,.\]
The set
$$ \sigma_{Ly}^\infty = \bigcup_{\xi_0\neq 0} [\lambda_-^{\infty}, \lambda_+^{\infty}] $$
is called the Lyapunov spectrum. It coincides with the real part of the normal (eigenvalue-) spectrum in the case of an autonomous equation. Due to the limiting behaviour of equation~\eqref{eq:intro1} this is also the case for the equations considered in this work so the spectrum becomes asymptotically autonomous. Under the presence of an exponential splitting in stable and unstable directions (which may now vary in time) the existence of stable and unstable manifolds for nonautonomous differential equations can be shown, see e.g.~\cite{KloedenRasmussen}. However, the non-hyperbolic equilibrium point of equation~\eqref{eq:intro1} does not allow such a strategy. We are going to prove that a blowup transformation can yield an exponential estimate of the linearization, but does so at the cost of a more difficult delay equation, which is not a standard nonautonomous equation anymore.  

\section{Setting and Main Result}
\label{sec:setting_main_result}

In this paper we consider nonautonomous differential equations in \(\R^2\) of the  form
\begin{equation}\label{eq:main}
\frac{\txtd}{\txtd\tau}\begin{pmatrix}
	x\\ y
\end{pmatrix}=	\begin{pmatrix}
	x\\ y
	\end{pmatrix}' =F(x,y) +G(x,y) h(\tau)
\end{equation}
with sufficiently smooth  functions \(F,G\colon \R^2 \to \R^2 \)  and  an exponentially decaying function \({h \in C^1(\R ; \R)} \). To be more precise  let	\(h(\tau)\) converge to 0 as \(\tau \to \pm\infty\) and satisfy 
\begin{equation}\label{eq:assumption}
|h(\tau)| \leq H \txte^{-\eta \tau}  ~\text{ as well as }~  |h'(\tau)| \leq  H \txte^{-\eta \tau}
\end{equation}
for all \(\tau \in \R\) and some positive constants \(H, \eta \in \R\). One might also consider a function \(h\) that is decaying exponentially only to one end, i.e. suppose \eqref{eq:assumption} holds for all \(\tau\in \R^+\) or all \(\tau\in \R^-\). In these cases Theorem~\ref{thm:stable_mfld} can only be applied in one time direction to obtain either a stable or unstable manifold. Such a function \(h\) might for example be obtained as the solution of a scalar differential equation with a stable equilibrium. We further assume that \(F(0,0) = G(0,0) = (0,0)\) so that the origin is an equilibrium point. The origin is supposed to be non-hyperbolic in the sense that the Lyapunov spectrum contains the value \(0\). It is straightforward to verify that~\eqref{eq:assumption} implies that the Lyapunov spectrum consists exactly of the real parts of the eigenvalues of the Jacobian \(\textnormal{D}F(0)\). In particular, in the limits of \(\tau \to \pm\infty\) we have an asymptotically autonomous equation and we assume that \(\textnormal{D}F(0)\) has at least one eigenvalue on the imaginary axis.\medskip 

We apply a blowup coordinate transformation in combination with a time rescaling to desingularize our equation~\eqref{eq:main} at the origin. We perform a directional blowup 
\begin{equation*}
	(x,y)=(u,uv)
\end{equation*}  
that stretches the origin to the invariant line \(\{u=0\}\) and leads to an equation of the form 
\begin{equation}
\label{eq:main_uv}
\begin{pmatrix}u\\v\end{pmatrix}' = \tilde{f}(u,v) + \tilde{g}(u,v) h(\tau)	
\end{equation}
for some functions \(\tilde{f}, \tilde{g} \colon \R^2 \to \R^2\). Note that since \((x,y)=(0,0)\) is an equilibrium point of equation~\eqref{eq:main} the right hand side of~\eqref{eq:main_uv} vanishes whenever \(u=0\). \medskip

We combine this transformation with a time rescaling 
\begin{equation}\label{eq:rescaling1}
	\omega(\tau) =  \int_{0}^{\tau} \frac{1}{z(\xi(r))}~\txtd r \,,
\end{equation} as introduced in Section~\ref{sec:time_reparametrization},
now for some trajectory \(\xi(\tau)\) of equation~\eqref{eq:main_uv}.\medskip

 One may also choose a different time \(\tau_0\in \R\) that remains invariant under the time reparametrization~\(\omega\). The rescaling then takes the form \(\omega(\tau) =  \int_{\tau_0}^{\tau} \frac{1}{z(\xi(s))}\,\txtd s \,\); we only consider the case \(\tau_0 = 0\) here. \medskip 

 In order to desingularize equation~\eqref{eq:main_uv} we choose \[ z\colon \big(\R\!\setminus\!\{0\}\big) \times \R \to \R^+ \,,~~~~ z(u,v) = |u|^{-\kappa} \] 
 for some \(\kappa \in \mathbb{N}\), which constitutes the degree of desingularization. As we have already seen before, the inverse \(\rho\) of \(\omega\) satisfies  $ \rho(t)=\int_{0}^{t} z(\psi(s)) \,\txtd s $  with rescaled trajectories \(\psi(t) := \xi(\rho(t))\).   
  
  These new trajectories $\psi(t) = (u(t), v(t))$  now satisfy the differential equation
\begin{equation}
\label{eq:delay_general}
 \ddt \begin{pmatrix}u \\ v
 \end{pmatrix}= \begin{pmatrix}u \\ v
\end{pmatrix}' = f(u,v) + g(u,v) \, h\!\left(\int_{0}^{t}\frac{1}{|u(s)|^\kappa}~\txtd s\right) .
\end{equation}
where $$ f(u,v)= \tilde{f}(u,v)\cdot z(u,v) = \frac{1}{|u|^\kappa}\,\tilde{f}(u,v)  \text{ ~~~and~~~ }  g(u,v)= \tilde{g}(u,v) \cdot z(u,v)  = \frac{ 1}{|u|^\kappa}\, \tilde{g}(u,v) \,.$$
The time has been changed (trajectory-wise) according to  
\begin{equation}\label{eq:rescaling2}
	\tau = \rho(t)=  \int_{0}^{t} \frac{1}{|u(s)|^\kappa} \,\txtd s \,.
\end{equation}

 Note that the right hand side of  equation \eqref{eq:delay_general} above is only well-defined as long as \(u(s)\neq 0\). However, we will see later in Section~\ref{sec:proof}, that we can extend the vector field also to the case of \(u=0\); it is then just given by \(f(0,v)\). The positive and negative half spaces \(\{u>0\}\) and \(\{u<0\}\) are invariant under the dynamics and separated by the singular line \(\{u=0\}\). The desingularizing function \( z(u,v) = |u|^{-\kappa} \) corresponds to a division by \(u^\kappa\) and \((-u)^\kappa\) on  \(\{u>0\}\) and \(\{u<0\}\) respectively.

 We assume that after the desingularization from \(\tilde{f}\) and \(\tilde{g}\) to \(f\) and \(g\) there is a equilibrium point on the set \(\{u=0\}\), i.e., we have \(f(0,v_*) = 0\) for some \(v_*\in \R\). Without loss of generality we can assume that this equilibrium lies at the origin \((u,v)=(0,0)\). Most importantly, we require that the linearization at this equilibrium is \emph{hyperbolic}, i.e., \(\textnormal{D}f(0,0)\) has no eigenvalue on the imaginary axis, which can always be achieved in the plane~\cite{Dum77} after potentially multiple blow-ups; yet, for large classes of vector fields encountered in practice, a single suitably weighted blow-up suffices to obtain hyperbolicity~\cite{Dumortier1,KuehnBook}. Our main invariant manifold result, stated below in Theorem~\ref{thm:stable_mfld}, will be non-trivial if we suppose that \(\textnormal{D}f(0,0)\) has one positive and one negative eigenvalue as otherwise the stable/unstable manifolds become trivial. In fact, the first key problem for blowing up nonautonomous systems originates from the observation that the equation~\eqref{eq:delay_general} is \emph{not} an ordinary differential equation anymore, but a \emph{delay} equation. The derivative at time \(t\) depends on the whole trajectory on the time interval \([0,t]\). This also means that the delay grows unbounded as time evolves forward.
 Hence, following for example \cite{Hino2002}, we should consider this problem on a space of functions defined on \((-\infty, 0]\). The function containing the data history up to time \(t\) is normalized/shifted onto the domain \((-\infty, 0]\) and forms a so-called segment \((u_t, v_t)\) which is given by \((u_t, v_t)(\theta) = (u, v)(t+\theta)\) for any \(\theta \in (-\infty, 0]\). The right hand side of \eqref{eq:delay_general} is then a nonlinear functional acting on \((u_t, v_t)\). In this notation our equation reads
\begin{equation}\label{eq:maindelay}
	\begin{pmatrix}
		u \\ v 	\end{pmatrix}' = f\big(u_t(0), v_t(0)\big)+ g\big(u_t(0),v_t(0)\big)\, h\!\left(\int_{-t}^{0} \frac{1}{|u_t(s)|^\kappa} ~\txtd s\right)\,.
\end{equation}

Note that equation~\eqref{eq:delay_general} is only solvable in forward time and for \(t \geq 0\). By the use of a time reversal one may pose an analogous problem in backwards time.

To begin with, we need to define a phase space for the delay-equation. Let $\|\cdot\|$ denote the usual Euclidean norm on $\R^2$ and consider first the space
 \[ \tilde{\B} := \left\{ \phi \in  C(\R^-,\R^2) \,,~ \sup_{\theta \in \R^-} \|\phi(\theta) \txte^{\lambda \theta} \| < \infty  \right\} \] for some \(\lambda >0\), which we equip with the norm \(\|\phi\|_{\B} = \sup_{\theta \in \R^-} \txte^{\lambda \theta}\|\phi(\theta)\|  \). In order to give a reasonable meaning to the singular integrals and assure continuity of the vector field, we will further define \(\B \subset \tilde{\B} \), which contains all functions of \(\tilde{\B}\) that are additionally \(M\)-Lipschitz continuous, i.e., that satisfy \(\|\phi(s) -\phi(t) \|\leq M | s-t|\) for all \(s,t \in \R^-\) for a prescribed constant \(M >0\), so
 \[ \B := \left\{ \phi \in  C(\R^-,\R^2)\,,~ \sup_{\theta \in \R^-} \|\phi(\theta) \txte^{\lambda \theta} \| < \infty \,,~ \phi \in \text{Lip}_M\!\left(\R^-,\R^2\right) \right\}\,. \]
 The Lipschitz-constant \(M\) is  taken as the supremum of the vector field of equation~\eqref{eq:maindelay} on a domain \(D\) around \(0\). As \(h\) is bounded and \(f, g\) are regular, this supremum exists on bounded domains and furthermore, by considering a smaller domain, we can assume that \(M < \eta\), with \(\eta\) being the exponential decay rate of \(h\) and \(h'\). Since we are only interested in the local behaviour of the dynamics around the origin, the consideration of only a small neighbourhood of the origin is no restriction. From now on we will always assume that outside this domain the right hand side of~\eqref{eq:maindelay} is suitably modified by a cut-off function. Note that classical solutions of \eqref{eq:delay_general} automatically satisfy this Lipschitz condition. \medskip
 
According to \cite{Mura07} the space \(\tilde{\B}\) is a uniformly fading memory space. Furthermore, the subset of all Lipschitz-continuous functions with a given Lipschitz-constant is closed with respect to the norm \(\|\cdot\|_{\B}\)\,, so \(\B\) is also a uniformly fading memory space.\medskip
 
Given some initial condition \(\phi \in \B\) at initial time \(t=0\) that evolves in forward time according to the differential equation \[X' = \txtD f(0,0)X\]  we define the family of solution operators \(\{V(t)\}_{t\geq 0}\) by

\[ [V(t)\phi](\theta) = \begin{cases}
\phi(t+\theta) ~~~&\text{if} ~~  \theta < -t \,,\\ \txte^{(t+\theta) \txtD f(0,0)} \phi(0) ~~~&\text{if} ~~ \theta \in [-t,0] \,.\end{cases} \]  
\(\{V(t)\}_{t\geq 0}\) forms a strongly continuous semigroup of bounded linear operators on \(\B\). According to the decomposition of \(\R^2 = \tilde{E}^{\txts} \oplus \tilde{E}^{\txtu} \) into the stable and unstable eigenspace of \(\txtD f(0,0)\) we can split the phase space \(\B = E^{\txts} \oplus E^{\txtu}\) into 
\begin{align*}
E^{\txts} &= \{\phi \in \B ,~ \phi(0) \in \tilde{E}^{\txts}\, \} \,,\\
E^{\txtu} &= \{\phi \in \B ,~ \phi(0) \in \tilde{E}^{\txtu} \text{ ~ and~  }  \phi(t) = \txte^{\txtD f(0,0)t}\phi(0) \text{ for all } t \in \R^- \} \,,
\end{align*}
which are invariant under the semigroup \(\{V(t)\}_{t\geq 0}\) since \(\tilde{E}^s\) and \(\tilde{E}^u\) are invariant under \(\txte^{t \txtD f(0,0)}\). Note that with this choice of \(E^u\) we can extend the family of operators \(V(t)\) on \(E^u\) to a group defined for all \(t\in \R\).
We denote by \(\Pi^{\txts}\) and \(\Pi^{\txtu}\) the projection onto \(E^{\txts}\) and \(E^{\txtu}\) respectively. 
According to \cite{Hino2002} there are constants \( C, \alpha> 0\) such that
\begin{align}
\label{eq:exponential_estimate}
\begin{split}
V^{\txts}(t) &:= V(t)\big\vert_{E^{\txts}} \leq C \txte^{-\alpha t} ~~~\text{ for all } t \in \R^+ \text{~~~and } \\
V^{\txtu}(t) &:= V(t)\big\vert_{E^{\txtu}} \leq C \txte^{\alpha t} ~~~~~\text{ for all } t \in \R^- 
\,.
\end{split}
\end{align}

Choosing the exponent \(\lambda\) in the definition of the phase space \(\B\) appropriately we can assure that \(\alpha\) is determined by the eigenvalues of \(\txtD f(0,0)\). Furthermore, we  set \begin{equation}\label{eq:estimateProjectionNorm}
	L := \|\Pi^s\|+\|\Pi^u\|\,.
\end{equation} 

Considering the case that the invariant set \(\{u=0\}\) forms the unstable eigenspace \(\tilde{E}^{\txtu}\), we seek a description of a stable manifold that is transversal to \(\{u=0\}\). As usual, by reversing the order of time (in equation~\eqref{eq:main}, before blowup) one can use the same procedure to obtain unstable manifolds that satisfy the  properties below in backwards time. In the following the expression \(\mu_t(\phi, \sigma)\) denotes the segment at time \(t\) of a solution to the equation~\eqref{eq:maindelay} with initial data \((\phi, \sigma)\in \B\times \R\). 

\begin{theorem}[Stable Manifold]
	\label{thm:stable_mfld}
	Assume that the zero solution of~\(\eqref{eq:maindelay}\) is hyperbolic and the unstable eigenspace \(\tilde{E}^{\txtu}\) of \(Df(0,0)\) is given by \(\{u=0\}\) and let \(\sigma_0 > 0 \). There are \(r,\delta>0\) such that the set	
	\[W_{{\textnormal{loc}}}^{\txts} := \Big\{\, (\phi, \sigma) \in \B \times [\sigma_0, \infty) \,: ~ \|\Pi^{\txts}\phi\|_\B <r, \|\mu_t(\phi, \sigma)\|_\B <\delta ~~\forall~ t \in [\sigma, \infty) \,\Big\} \]   
		
	is a manifold, given locally as the graph of a function \(w\colon B_{E^{\txts}}(r)\times [\sigma_0, \infty) \to E^u\) with the following properties:
	\begin{enumerate}[(i)]
		\item the function \(w\) is Lipschitz-continuous in its first argument, continuous in the second; 
		\item \(w(\cdot, \sigma)\) is differentiable at 0 and \(W_{{\textnormal{loc}}}^{\txts}\) is tangent to \(E^{\txts}\);
		\item for every \(\beta \in (0,\alpha)\) (where \(\alpha\) is the exponential decay-rate of the linear system in equation~\eqref{eq:exponential_estimate}) there is a constant \(N\), such that \[ \|\mu_t(\phi, \sigma)\|_\B \leq  N \txte^{-\beta (t-\sigma)}  \|\phi\|_\B  ~~~\text{ for all  }     (\phi, \sigma) \in W_{{\textnormal{loc}}}^{\txts}\,, ~t \in [\sigma, \infty)\,; \]
		\item \( W_{{\textnormal{loc}}}^{\txts}\) is locally  positively invariant, i.e. for every \((\phi, \sigma) \in W_{{\textnormal{loc}}}^{\txts} \)  the solution \(\mu_t(\phi, \sigma)\) lies in \(W_{{\textnormal{loc}}}^{\txts}\) for every \(t \geq \sigma\) as long as \({\Pi^s\mu_t(\phi, \sigma) \in B_{E^s}(r) \,.}\)	
	\end{enumerate}
\end{theorem}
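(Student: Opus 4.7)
The plan is to prove existence of the stable manifold via a Lyapunov--Perron fixed point argument, using the variation of constants formula for infinite-delay equations from~\cite{Hino2002} together with the exponential dichotomy~\eqref{eq:exponential_estimate}. Denote by $\tilde R(t,\psi)$ the nonlinear part of~\eqref{eq:maindelay} after subtracting the linearization at the origin,
\[
\tilde R(t,\psi) \,=\, f(\psi(0)) - \txtD f(0,0)\psi(0) \,+\, g(\psi(0))\, h\!\left(\int_{-t}^{0} \frac{1}{|\psi(s)|^{\kappa}}\,\rmd s\right),
\]
so that~\eqref{eq:maindelay} reads $\mu' = \txtD f(0,0)\mu(0) + \tilde R(t,\mu)$. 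A segment-valued solution $\mu_t$ with $\Pi^{\txts}\mu_\sigma = \phi \in B_{E^{\txts}}(r)$ and $\mu_t \to 0$ as $t\to\infty$ solves the Lyapunov--Perron integral equation
\[
\mu_t \,=\, V^{\txts}(t-\sigma)\phi \,+\, \int_{\sigma}^{t} V^{\txts}(t-s)\,\Pi^{\txts} X_0\, \tilde R(s,\mu_s)\,\rmd s \,-\, \int_{t}^{\infty} V^{\txtu}(t-s)\,\Pi^{\txtu} X_0\, \tilde R(s,\mu_s)\,\rmd s,
\]
where $X_0$ denotes the extension operator from~\cite{Hino2002}. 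The graph function is then defined by $w(\phi,\sigma) := \Pi^{\txtu}\mu_\sigma(\phi,\sigma)$.

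The main technical hurdle is to establish Lipschitz regularity of $\tilde R$ with a small constant on a $\delta$-ball in $\B$. The contribution $f(\psi(0)) - \txtD f(0,0)\psi(0)$ is $\bigo{\|\psi\|_{\B}}$-Lipschitz by Taylor expansion. The $h$-dependent term is significantly harder, since its argument is a singular nonlocal functional of $\psi$: the integrand $|\psi(s)|^{-\kappa}$ blows up wherever $\psi(s)=0$. The saving grace is that $\B$ only contains $M$-Lipschitz segments, so $|\psi(s)| \geq |\psi(0)| - M|s|$ on an interval around $0$, which renders the singular integrand integrable. Moreover, $\int_{-t}^{0}|\psi(s)|^{-\kappa}\,\rmd s \to \infty$ as $t \to \infty$, and the exponential decay $|h(\tau)|,|h'(\tau)| \leq H\txte^{-\eta\tau}$ forces the whole $g\cdot h$ term to decay exponentially in $t$. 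Combining the bound on $h'$ with the Lipschitz lower envelope of $|\psi|$ yields a Lipschitz estimate for $\psi \mapsto h(\tau(t,\psi))$ whose constant inherits this exponential decay. Untangling the interplay of the singular integrand, the nonlocal dependence and the exponential smallness of $h$ is the step I expect to be the main obstacle.

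With $\tilde R$ under control, the fixed point is sought in the Banach space $\cY_\beta := \{ \mu \in C([\sigma,\infty),\B) : \|\mu\|_\beta := \sup_{t \geq \sigma} \txte^{\beta(t-\sigma)}\|\mu_t\|_{\B} < \infty \}$ for some $\beta \in (0,\alpha)$. Using~\eqref{eq:exponential_estimate}, the two integrals are bounded by $\tfrac{C}{\alpha-\beta}$ and $\tfrac{C}{\alpha+\beta}$ times the Lipschitz constant of $\tilde R$ on the $\delta$-ball, and for sufficiently small $r,\delta$ the Lyapunov--Perron operator becomes a uniform contraction on the $\delta$-ball in $\cY_\beta$. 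The parameter-dependent Banach fixed point theorem yields a unique fixed point $\mu(\phi,\sigma)$, Lipschitz in $\phi$ and continuous in $\sigma$, establishing property~(i). Property~(iii) is built into the norm of $\cY_\beta$, and property~(iv) follows from uniqueness of solutions together with the semigroup property: a trajectory emanating from $W^{\txts}_{{\textnormal{loc}}}$ either remains in the $\delta$-ball (and hence on the manifold) or leaves through $\partial B_{E^{\txts}}(r)$.

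For property~(ii), I would invoke the implicit function theorem on $\Psi(\mu,\phi) := \mu - T_{\phi,\sigma}(\mu)$ at $(\mu,\phi) = (0,0)$. Since $\tilde R$ vanishes to first order at the origin --- the $f - \txtD f(0,0)\cdot$ piece is quadratic, while the $g\cdot h$ piece is killed by $h(\tau)\to 0$ faster than any polynomial as $\tau\to\infty$ together with the singular blow-up of $\tau(t,\psi)$ at $\psi=0$ --- the operator $D_\mu T_{\phi,\sigma}(0)$ vanishes, and $D_\mu \Psi(0,0) = \Id$ is trivially invertible. The implicit function theorem thus provides differentiability of $\phi \mapsto \mu(\phi,\sigma)$ at $\phi = 0$, and a short computation gives $Dw(0,\sigma) = \Pi^{\txtu}\circ V^{\txts}(0) = \Pi^{\txtu}\Pi^{\txts} = 0$, so $W^{\txts}_{{\textnormal{loc}}}$ is tangent to $E^{\txts}$ at the origin, completing the proof.
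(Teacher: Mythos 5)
Your proposal follows essentially the same route as the paper: decompose the vector field into its linearization at $0$ plus a remainder $R$, set up the Lyapunov--Perron integral equation using the generalized variation of constants formula for infinite-delay equations, work in an exponentially weighted space $\cY_\beta$ with $\beta \in (0,\alpha)$, run a uniform contraction and then invoke the implicit function theorem at the origin to upgrade Lipschitz continuity of $w(\cdot,\sigma)$ to differentiability at $0$ with $Dw(0,\sigma)=0$. This is precisely the paper's architecture (their Propositions~4.1, 4.2 and 5.1--5.3, followed by the two-pronged contraction/implicit-function argument). The main place where you punt is exactly the place the paper has to work hardest, and it is worth being explicit about the one quantitative fact that makes the whole thing close.

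Concretely, you write that ``combining the bound on $h'$ with the Lipschitz lower envelope of $|\psi|$ yields a Lipschitz estimate \dots whose constant inherits this exponential decay,'' and you flag this as the ``main obstacle.'' The paper resolves it via the estimate
\[
\int_{-t}^{0}\frac{\rmd s}{|u(s)+w(s)|} \;\geq\; -\frac{1}{M}\ln\!\left(\frac{|u(s_1)+w(s_1)|}{|u(s_1)+w(s_1)|+M\tfrac{t}{2}}\right),
\]
where $s_1$ is the argmin of $|u+w|$ on $[-t,0]$ and $M$ is the global Lipschitz bound built into $\B$. Feeding this into $|h|,|h'|\leq H\txte^{-\eta\tau}$ produces a factor $\bigl(|u(s_1)+w(s_1)|/(|u(s_1)+w(s_1)|+M t/2)\bigr)^{\eta/M}$, which must beat the extra power of $1/|u(s_1)+w(s_1)|$ coming from the chain rule on $h\!\bigl(\int\tfrac{\rmd s}{|u+w|}\bigr)$. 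This requires $\eta/M>1$, i.e.\ the a priori reduction $M<\eta$ that the paper arranges by shrinking the domain and cutting off the vector field. Your sketch never isolates this inequality, and without it $D\cK(0,0,t)$ would not be well defined and the small-Lipschitz bound $\zeta(\delta)\to 0$ (Proposition~\ref{prop:estimate}) that powers the contraction would fail. Two smaller points: the Lipschitz inequality you quote, $|\psi(s)|\geq|\psi(0)|-M|s|$, gives an \emph{upper} bound on the singular integral, whereas what is needed is the \emph{lower} bound via the argmin $s_1$ as above; and the resulting constant $\zeta(\delta)$ is uniform only for $t\geq\sigma_0>0$, which is why $\sigma_0>0$ appears in the statement and why $r,\delta$ degenerate as $\sigma_0\to 0$ (see Remark~(2) after the theorem). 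Filling in Propositions~\ref{prop:cont_diff} and~\ref{prop:estimate} along these lines would complete your argument.
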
  
\medskip

\emph{Remarks:} \begin{enumerate}[(1)] \vspace{-0.2cm}
\item If one chooses the time \(\tau_0\), which stays fixed under the time rescaling \eqref{eq:rescaling1}, different from~0, the minimal initial time \(\sigma_0\) needs to satisfy \(\sigma_0>\tau_0\) accordingly.
\item We may also extend the stable manifold up to 0 in time-direction by taking $\sigma_0 \to 0$. Note that when doing so the domain of definition of the stable manifold (as obtained by Theorem~\ref{thm:stable_mfld})  shrinks to a point. This is due to the dependence of $r$ and $\delta$ on $\sigma_0$ by means of the function~$\zeta$ defined in Proposition \ref{prop:estimate}.
\end{enumerate} \medskip

The  stable manifold \(W_{{\textnormal{loc}}}^{\txts}\) of equation \eqref{eq:maindelay} constructed above induces an invariant manifold \(\widetilde{W}_{{\textnormal{loc}}}^{\txts}\) in the finite dimensional extended phase space $ \{(u,v, t) \in \R^2 \times \R\}$ of equation \eqref{eq:delay_general} in a natural way: We take some \((\tilde{u}, \tilde{v}) \in \R^2\) and \(\sigma \geq \sigma_0\); then there is \(\phi \in \B\) such that \(\phi(0) = (\tilde{u}, \tilde{v})\) and \(\phi\) is a solution of \eqref{eq:delay_general} on the interval \([-\sigma, 0]\).  Now \((\tilde{u}, \tilde{v}) \) lies on \(\widetilde{W}_{{\textnormal{loc}}}^{\txts}\), if and only if the corresponding history function \(\phi\) lies on \(W_{{\textnormal{loc}}}^{\txts}\). This means the induced manifold \(\widetilde{W}_{{\textnormal{loc}}}^{\txts}\) is obtained by evaluating those elements \(\phi\) of \(W_{{\textnormal{loc}}}^{\txts}\) at \(\theta = 0\), that correspond to classical solutions of \eqref{eq:delay_general}. Note that the \(M\)-Lipschitz continuity is automatically fulfilled due to the choice of \(M\); the behaviour of \(\phi\) before time \(-\sigma\) is neither relevant for the Lyapunov Perron operator and thus the stable manifold, nor can it directly be determined by equation~\eqref{eq:delay_general} as the differential equation
per se is only defined for positive times.  The manifold \(\widetilde{W}_{{\textnormal{loc}}}^{\txts}\) inherits the properties of \(W_{{\textnormal{loc}}}^{\txts}\) pointed out in the theorem above.\medskip

 We next reverse the blowup transformation  \((x,y)= (u,uv)\) and return to our initially considered differential equation
 \begin{equation*}\label{eq:main2}
 \begin{pmatrix}
 x\\ y
 \end{pmatrix}'=F(x,y) +G(x,y) h(\tau)\,.
 \end{equation*} 
 Here one needs to be careful, since we do not only need to change the spatial coordinates but also have a space-dependent time transformation given by
 \[ \rho(t) = \int_{-t}^{0}\frac{1}{|u_t(s)|^\kappa}~\txtd s \,,   \]
 for the segment \(u_t\) of a solution  of \eqref{eq:maindelay} as introduced in \eqref{eq:rescaling2}.  Again we take those $(\phi, \sigma) = (u,v, \sigma) \in W_{{\textnormal{loc}}}^{\txts}$ that correspond to classical solutions. These triplets are now mapped using the blow up map  and the time transformation $\rho$, i.e. we have
 \begin{equation}\label{eq:coordinate_change}
   (u,v,\sigma) ~\mapsto (x,y,\tau)~ = \Big(u(0), u(0)v(0), \int_{-\sigma}^{0}\frac{1}{|u(s)|^\kappa}\, \txtd s \Big) \,. 
\end{equation}
 
 Note that this coordinate transformation is regular as long as $u(s) \neq 0~ \forall s \in [-\sigma, 0]$. Since we only consider $u$ that correspond to classical solutions (of equation \eqref{eq:delay_general}) and the half planes $\{u>0\} $ and $\{u<0\}$ are invariant under \eqref{eq:delay_general}, no problems arise. Clearly the equilibrium solution $(u,v) = (0,0)$ cannot be transformed by this map.\medskip
 
 This transformation yields a locally positively invariant manifold $\mathcal{M}$ in the extended phase space $ \{(x,y, \tau) \in \R^2 \times \R\}$ of equation \eqref{eq:main}. To be more precise we obtain two invariant manifolds $\mathcal{M}^+, \mathcal{M}^-$ in the half spaces $\{x>0\}$ and $\{x<0\}$ as $W_{{\textnormal{loc}}}^{\txts}$ is interrupted by $u=0$, where the change of variables is singular.   Solutions on the transformed manifolds still converge to the origin  (more accurately to the invariant line \(\{x=y=0\} \subset \R^3\)), but due to the rescaling of time an exponential decay can no longer be assured.  \medskip

 Summerizing the previous considerations and applying them not only to the stable, but also the unstable manifold which can obtained by Theorem~\ref{thm:stable_mfld} analogously after reversing the order of time, we obtain the following.
 
 \begin{corollary}
 	There are nonautonomous invariant manifolds of equation \eqref{eq:main} in ${\R^2 \times \R}$  corresponding to the stable and unstable manifolds of equation \eqref{eq:maindelay}. They consist of integral curves of solutions that converge to $\big\{(x,y,\tau) \in \R^2\times \R ~|~ x=y=0 \big\}$ in forward and backward time respectively. The transforming change of variables (in the case of the stable manifold) is given by $~ (u,v,\sigma) \,\mapsto\, (x,y,\tau) \,=\,\Big(u(0), u(0)v(0), \int_{-\sigma}^{0}\tfrac{1}{|u(s)|^\kappa}\, \txtd s \Big) \,.$
 \end{corollary}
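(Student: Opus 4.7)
The plan is to transport the stable manifold $W_{{\textnormal{loc}}}^{\txts}$ of the delay equation~\eqref{eq:maindelay}, already obtained in Theorem~\ref{thm:stable_mfld}, back to the original system~\eqref{eq:main} via the change of variables~\eqref{eq:coordinate_change}; the unstable case then follows by applying the same procedure to the time-reversed version of~\eqref{eq:main} before blowup.

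First I would restrict attention to those $(\phi, \sigma) \in W_{{\textnormal{loc}}}^{\txts}$ for which $\phi$ corresponds to a classical solution of~\eqref{eq:delay_general} on $[-\sigma, 0]$ and evaluate at $\theta = 0$ to obtain the induced finite-dimensional invariant manifold $\widetilde{W}_{{\textnormal{loc}}}^{\txts} \subset \R^2 \times \R$ in the $(u,v,t)$-extended phase space, exactly as described in the paragraph preceding the corollary. Local positive invariance and decay are inherited from properties~(iii) and~(iv) of Theorem~\ref{thm:stable_mfld}: the segment satisfies $\|\mu_t(\phi,\sigma)\|_\B \to 0$ exponentially, so the evaluated solution $(u(t),v(t))$ tends to $(0,0)$.

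Next I would apply the composite map~\eqref{eq:coordinate_change} to $\widetilde{W}_{{\textnormal{loc}}}^{\txts}$. Restricted to either invariant half-space $\{u>0\}$ or $\{u<0\}$, this map is smooth: the spatial part $(u,v) \mapsto (u, uv)$ is a diffeomorphism away from $u=0$, and $\sigma \mapsto \int_{-\sigma}^0 |u(s)|^{-\kappa}\,\txtd s$ is strictly monotone with positive derivative $|u(-\sigma)|^{-\kappa}$. Hence the images $\mathcal{M}^+$ and $\mathcal{M}^-$ are locally embedded manifolds in $\R^2 \times \R$, disjoint and separated by the singular line $\{x=0\}$. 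By the construction recalled in Section~\ref{sec:time_reparametrization}, trajectories of~\eqref{eq:maindelay} are mapped trajectory-wise onto trajectories of~\eqref{eq:main}, so positive invariance of $\widetilde{W}_{{\textnormal{loc}}}^{\txts}$ transfers to local positive invariance of $\mathcal{M}^\pm$ in the extended phase space. For the convergence claim it suffices to note that $x(\tau(t)) = u(t) \to 0$ and $y(\tau(t)) = u(t)v(t) \to 0$ as $t \to \infty$, while $\tau(t) = \int_0^t |u(s)|^{-\kappa}\,\txtd s \to \infty$ because the integrand blows up along the decaying $u$. The unstable case is handled identically after time reversal in~\eqref{eq:main}.

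The main obstacle lies in verifying that this formal bookkeeping is in fact correct, namely that the change of coordinates~\eqref{eq:coordinate_change} genuinely intertwines the dynamics of~\eqref{eq:maindelay} and~\eqref{eq:main} despite the singular factor $|u|^{-\kappa}$ at $u=0$ and the history-dependence of $\tau$. On any classical solution inside either half-plane $u$ stays away from zero on compact time intervals, so $|u(s)|^{-\kappa}$ is continuous there and the reparametrization is a genuine $C^1$ diffeomorphism onto its image; together with the computations in Section~\ref{sec:time_reparametrization} this yields the trajectory-wise correspondence and completes the argument.
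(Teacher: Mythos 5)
Your proposal is correct and follows essentially the same route as the paper: evaluate classical solutions of $W_{{\textnormal{loc}}}^{\txts}$ at $\theta=0$ to obtain $\widetilde{W}_{{\textnormal{loc}}}^{\txts}$, then push forward via the blow-down and trajectory-wise time change~\eqref{eq:coordinate_change} to obtain $\mathcal{M}^{\pm}$ in the half-spaces $\{x>0\}$ and $\{x<0\}$, with invariance and convergence to $\{x=y=0\}$ inherited, and the unstable case by time reversal. The paper presents this as a discussion preceding the corollary rather than a formal proof, and your write-up simply makes explicit the smoothness of the transformation away from $u=0$ and the monotonicity of $\sigma \mapsto \int_{-\sigma}^{0}|u(s)|^{-\kappa}\,\rmd s$, which the paper leaves implicit.
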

 \bigskip

 Note, that $\mathcal{M}$ will not be defined for all initial conditions in a neighbourhood of the origin \((x,y) = (0,0)\) for each particular initial time, since $\int_{-\sigma}^{0}\frac{1}{|u(s)|^\kappa}\, \txtd s \to \infty$ as $u \to 0$.

In order to close this missing part of the domain of the invariant manifold and extend the invariant manifold $\mathcal{M}$ in towards the (singular, for the transformation) line $\{x=y=0\}$ one may take the following approach into consideration. We take some $(u,v, \sigma) \in W_{{\textnormal{loc}}}^{\txts} $ on the stable manifold that corresponds to a classical solution of \eqref{eq:delay_general} and evaluate $(u,v)$ at $ -\sigma$; this is equivalent to taking $(u(0),v(0), \sigma) = (\tilde{u}, \tilde{v}, \sigma) \in \widetilde{W}_{{\textnormal{loc}}}^{\txts}$ and tracing solutions of equation \eqref{eq:delay_general} back until time $t=0$. This yields the set $\widetilde{W}_0$  of those initial conditions \((\hat{u},\hat{v}, 0) \in \R^3\), which are mapped to some  \((\tilde{u}, \tilde{v}, \sigma) \in \widetilde{W}_{{\textnormal{loc}}}^{\txts}\) by the flow of \eqref{eq:delay_general}. However, note that the properties of $\widetilde{W}_0$ are not characterized by Theorem~\ref{thm:stable_mfld}.  

Since \(t=\tau=0\) is the invariant time under the time-reparametrization, changing coordinates in this case becomes only a spatial transformation. Thus, applying the same transformation \eqref{eq:coordinate_change} as before, we obtain a set in \((x,y)\)-coordinates at \(\tau=0\). Solution curves to equation \eqref{eq:main} with respect to these initial conditions  now allow to extend the invariant manifolds towards $\{x=y=0\}$  similarly to backwards-time extension of the invariant manifold $\mathcal{M}$. Properties of $\widetilde{W}_0$ and possible extensions of $\mathcal{M}$  may be considered in a future work. Furthermore, a connection to the stable manifold of the autonomous limit equation $(u,v)' = f(u,v)$  seems conceivable. \bigskip

In summary, we achieved the goal of the blow-up method in the sense that we have employed hyperbolic theory to locally study non-hyperbolic nonautonomous dynamics. The cost is that we have to study a more complicated, even formally infinite-dimensional, dynamical system given by a delay equation with infinite delay.    

\section{Preliminaries}
\label{sec:preliminaries}

The proof of the Stable Manifold Theorem~\ref{thm:stable_mfld} in blown-up space is based on a Lyapunov-Perron method. To set up the method, we need (a generalized version of) the variation of constants formula that is central to defining the Lyapunov-Perron operators, whose fixed point will form the stable manifold. First, let us define an auxiliary function \(\Gamma^n\colon \R^2 \to \B\) by  \[ x \, \mapsto \, \Gamma^nx\,,~~~~\Gamma^n x (\theta) = \begin{cases} (n\theta + 1)x \,~~~~~\text{for } -\frac{1}{n} \leq \theta \leq 0 \,, \\
~~0 \hspace{2cm}\text{for } ~~\theta < -\frac{1}{n}~.
\end{cases}\]
\(\Gamma^n\) is an important technical tool, that allows us to generalize the well known variation of constants formula also to delay-equations  in the phase space \(\B\). Also note, that we have an estimate of the form 
\begin{equation}\label{eq:estimateGamma}
\|\Gamma^nx\|_\B \leq K\|x\| 	
\end{equation}
 for some \(K \in \R\), see also \cite{Hino2002}. Let $A\in\R^{2\times 2}$ denote a matrix, which we shall later on take as $A=\txtD f(0,0)$.  

\begin{proposition}[Variation of Constants Formula, \cite{Hino2002}]\label{prop:VOCformula}
	The segment \(x_t(\phi,\sigma, H)\) of a solution \(x(\cdot, \phi, \sigma, H)\) of the inhomogeneous equation \(x' =Ax + H(t)\) satisfies the equation
	\begin{equation*}
	x_t(\phi,\sigma, H) = V(t-\sigma)\phi + \lim\limits_{n \to \infty} \int_{\sigma}^{t} V(t-s) \Gamma^n H(s) ~\txtd s
	\end{equation*}
	for all \(t \geq \sigma\) as a limit in \(\B\).
	
\end{proposition}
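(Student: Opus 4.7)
The plan is to split the solution segment as $x_t = V(t-\sigma)\phi + y_t$, identify the homogeneous piece directly with $V(t-\sigma)\phi$, and then produce $y_t$ as the $\B$-limit of the regularized integrals. The homogeneous part is immediate from the definitions: if $x_h$ solves $x_h' = Ax_h$ on $\R^2$ with history $\phi$ at time $\sigma$, then for every $\theta\le 0$ one has $x_h(t+\theta) = \txte^{(t-\sigma+\theta)A}\phi(0)$ when $t+\theta\ge\sigma$ and $x_h(t+\theta) = \phi(t-\sigma+\theta)$ otherwise, which matches the two cases of $[V(t-\sigma)\phi](\theta)$ exactly.

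For the remainder $y(r):=x(r)-x_h(r)$, classical finite-dimensional ODE theory gives $y(r)=\int_\sigma^r \txte^{(r-s)A}H(s)\,\txtd s$ for $r\ge\sigma$ (and $y\equiv 0$ before $\sigma$ by the trivial history). Thus the target segment is
\begin{equation*}
y_t(\theta)=\begin{cases}\int_\sigma^{t+\theta}\txte^{(t+\theta-s)A}H(s)\,\txtd s & \text{if }t+\theta\ge\sigma,\\ 0 & \text{if }t+\theta<\sigma,\end{cases}
\end{equation*}
and the entire task reduces to showing $\int_\sigma^t V(t-s)\Gamma^nH(s)\,\txtd s\to y_t$ in $\B$.

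The next step is a case analysis of the integrand. By the definitions of $V$ and $\Gamma^n$, for each fixed $\theta\in\R^-$ the value $[V(t-s)\Gamma^nH(s)](\theta)$ equals $\txte^{(t-s+\theta)A}H(s)$ when $s\le t+\theta$; equals the tent value $(n(t+\theta-s)+1)H(s)$ when $t+\theta<s\le t+\theta+1/n$; and vanishes when $s>t+\theta+1/n$. Integrating over $s\in[\sigma,t]$ and letting $n\to\infty$, the first case reproduces $y_t(\theta)$ exactly on the nontrivial range $t+\theta\ge\sigma$, while the tent contribution is supported on an $s$-interval of length at most $1/n$ with integrand bounded uniformly in $n$, so it vanishes pointwise.

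The main obstacle, and the only genuinely analytic step, is upgrading this pointwise convergence to convergence in the weighted sup-norm on $\B$. Here I would combine the estimate \eqref{eq:estimateGamma} with the local boundedness of $V$ on $\B$ to obtain a uniform-in-$n$ bound $\|V(t-s)\Gamma^nH(s)\|_\B\le K'\|H(s)\|$ for $s\in[\sigma,t]$. An inspection of the three cases above shows that the difference $y_t(\theta)-\int_\sigma^t[V(t-s)\Gamma^nH(s)](\theta)\,\txtd s$ is non-zero only when $\theta\in[\sigma-t-1/n,\sigma-t]$ or when the tent window $[t+\theta,t+\theta+1/n]$ meets $[\sigma,t]$, and in either case its magnitude is $O(1/n)\|H\|_\infty$. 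Since the exponential weight $\txte^{\lambda\theta}$ is bounded by $1$ on $\R^-$, the weighted sup-norm of the difference is $O(1/n)$, giving convergence in $\B$ and finishing the proof.
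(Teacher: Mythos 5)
Your argument is correct and it elaborates the same route the paper sketches after the proposition statement: the substantive step, and the one the paper highlights, is the convergence $\int_\sigma^t V(t-s)\Gamma^n H(s)\,\txtd s \to x_t(0,\sigma,H)$ in $\B$, which you establish by the three-case decomposition of $[V(t-s)\Gamma^n H(s)](\theta)$ together with the uniform $O(1/n)$ bound coming from the tent's support of length at most $1/n$, its height being at most $1$, and the weight $\txte^{\lambda\theta}\le 1$ on $\R^-$. Where you diverge from the paper's cited outline is the homogeneous piece: the paper records $\lim_{n} x_t(\Gamma^n x,0,0)=V(t)x$ as the first key step, whereas you simply identify $(x_h)_t = V(t-\sigma)\phi$ by unwinding the case-split definition of $V$; this is cleaner and entirely adequate here, since $A$ is a concrete $2\times 2$ matrix and $V$ is defined explicitly rather than abstractly as in the general fading-memory setting of the reference. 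Two minor remarks: the auxiliary bound $\|V(t-s)\Gamma^n H(s)\|_\B\le K'\|H(s)\|$ you introduce is never actually used, as your final estimate relies only on the tent-window analysis; and the phrase ``non-zero only when $\theta\in[\sigma-t-1/n,\sigma-t]$ or when the tent window meets $[\sigma,t]$'' is redundant (the first condition is contained in the second) and in fact the difference is generically non-zero for all $\theta\ge\sigma-t-1/n$, but since your $O(1/n)\sup_{s\in[\sigma,t]}\|H(s)\|$ bound is uniform in $\theta$, this does not affect the conclusion.
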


The key steps in the proof of this proposition lie in showing that \[ \lim\limits_{n\to \infty} x_t(\Gamma^nx,0 ,0) = V(t)x  \text{~~~~~~as well as~~~~~}  \lim\limits_{n\to \infty}  \int_{\sigma}^{t}V(t-s) \Gamma^nH(s) ~\txtd s = x_t(0,\sigma, H) \,,\] which then leads to the above formula. \medskip

Another fundamental tool for the proof of Theorem~\ref{thm:stable_mfld} is the following result that connects bounded solutions with their integral representations. It is taken from \cite{Murakami2002} with only very little adaptations and can be proven in the  same way. 

\begin{lemma}[\cite{Murakami2002}]\label{lemma:LemmaJap} \ \\ 	
	Let \(\psi \in E^{\txts}\). If \(y\) is a solution of the equation \(y' = A y + R(y_t,t)\) such that \(\Pi^{\txts}y_{\sigma} = \psi\)\,, \(\sup_{t \geq \sigma}\|y_t\|_\B < \infty\) and \(\sup_{t \geq \sigma}\|R(y_t,t)\|_\B < \infty\), then the segment \(y_t\) satisfies for all \(t \geq \sigma \)\begin{equation*}
	y_t = V(t,\sigma)\psi + \lim\limits_{n \to \infty} \int_{\sigma}^{t} V(t-s) \Pi^{\txts} \Gamma^n R(y_s,s) ~\txtd s - \lim\limits_{n \to \infty} \int_{t}^{\infty} V(t-s) \Pi^{\txtu} \Gamma^n R(y_s,s) ~\txtd s \,. 
	\end{equation*}
	On the other hand, if a function \(y \in C([\sigma, \infty), \B)\) 		
	satisfies 
	\begin{equation*}
	y(t) = V(t,\sigma)\psi + \lim\limits_{n \to \infty} \int_{\sigma}^{t} V(t-s) \Pi^{\txts} \Gamma^n R(y(s),s) ~\txtd s - \lim\limits_{n \to \infty} \int_{t}^{\infty} V(t-s) \Pi^{\txtu} \Gamma^n R(y(s),s) ~\txtd s 
	\end{equation*}
	 for all \(t\in [\sigma, \infty)\), as well as the estimates \(~\sup_{t \geq \sigma}\|y(t)\|_\B < \infty\) and \(\sup_{t \geq \sigma}\|R(y(t),t)\| < \infty\) like above, it defines a solution \(\xi\) given by \[\xi(t) =\begin{cases} [y(t)](0) & \text{if~ } t \geq \sigma, \\ [y(\sigma)](t) & \text{if~ } t < \sigma \end{cases}\] of the functional differential equation~\eqref{eq:main} with \(\Pi^{\txts}\, \xi_\sigma = \psi\) and \(\xi_t = y(t) \).\\
\end{lemma}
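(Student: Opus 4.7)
The plan is to prove the two implications separately, with the forward direction doing the heavy lifting.

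For the forward direction, I apply the variation-of-constants formula of Proposition~\ref{prop:VOCformula} to the inhomogeneous equation with $H(t) = R(y_t, t)$, which is bounded by hypothesis. This yields
\[ y_t \;=\; V(t-\sigma)\,y_\sigma \;+\; \lim_{n\to\infty}\!\int_\sigma^t V(t-s)\,\Gamma^n R(y_s, s)\,\txtd s. \]
I decompose using $\Id = \Pi^\txts + \Pi^\txtu$; the projections commute with $V$ by $V$-invariance of $E^\txts, E^\txtu$. The stable piece is already of the desired shape, since $\Pi^\txts y_\sigma = \psi$. The key manoeuvre is in the unstable piece: rather than leaving $V(t-\sigma)\Pi^\txtu y_\sigma$ as the initial contribution, rewrite it by running the same identity at a later time $T > t$, projected onto $E^\txtu$, and applying $V(t-T)$, which is well-defined because $V|_{E^\txtu}$ extends to a group. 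This produces
\[ V(t-\sigma)\Pi^\txtu y_\sigma \;=\; V(t-T)\Pi^\txtu y_T \;-\; \lim_{n\to\infty}\!\int_\sigma^T V(t-s)\,\Pi^\txtu\Gamma^n R(y_s, s)\,\txtd s. \]
Sending $T \to \infty$, the boundary contribution vanishes: by \eqref{eq:exponential_estimate} one has $\|V(t-T)\Pi^\txtu y_T\|_\B \leq C\,\txte^{\alpha(t-T)}\|\Pi^\txtu\|\sup_{T\geq\sigma}\|y_T\|_\B$, and $\txte^{\alpha(t-T)} \to 0$. Substituting back and combining the two unstable integrals via $\int_\sigma^t - \int_\sigma^\infty = -\int_t^\infty$ yields the claimed formula.

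For the converse direction, suppose $y \in C([\sigma, \infty), \B)$ satisfies the integral equation with the stated boundedness. Evaluating at $t = \sigma$ collapses the definite integral and identifies $\Pi^\txts y(\sigma) = \psi$, while the improper unstable integral pins down $\Pi^\txtu y(\sigma)$; hence $y(\sigma) \in \B$ is fully determined. Define $\xi$ as in the lemma. The goal is to show $\xi_t = y(t)$ for all $t \geq \sigma$, after which differentiating the integral representation at $\theta = 0$ produces $\xi'(t) = A\xi(t) + R(\xi_t, t)$: the Leibniz contributions from the two variable limits of integration collapse to $R(y(t), t)$ by the spike structure of $\Gamma^n$ as $n \to \infty$, precisely as in the proof of Proposition~\ref{prop:VOCformula}. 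To establish $\xi_t = y(t)$, I would substitute the explicit action of $V(t-s)$ on segments --- pure translation for $\theta < -(t-s)$ and the matrix exponential on $[-(t-s), 0]$ --- into the integral equation and reassemble, comparing the two sides pointwise in $\theta$.

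The principal obstacle is the legitimacy of the double passage $n \to \infty$ and $T \to \infty$ in the forward direction. Both are handled by dominated convergence: \eqref{eq:estimateGamma} gives the uniform bound $\|\Gamma^n R(y_s, s)\|_\B \leq K\sup_s \|R(y_s, s)\|$, and the exponential factor $\txte^{\alpha(t-s)}$ for $s \geq t$ on the unstable subspace supplies an integrable majorant on $[t, \infty)$, so the $n$-limit and the $T$-limit commute with integration. A secondary bookkeeping issue is the segment identification $\xi_t = y(t)$ in the converse direction, which requires handling $\theta \in (-\infty, -(t-\sigma))$ (where $\xi_t(\theta) = [y(\sigma)](t-\sigma+\theta)$ and $V$ acts by translation) separately from $\theta \in [-(t-\sigma), 0]$ (where $V$ acts by $\txte^{\,\cdot\,A}$), and checking consistency on both regimes.
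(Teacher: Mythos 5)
The paper does not actually prove this lemma; it cites \cite{Murakami2002} and remarks that the argument carries over with only minor adaptations, so there is no in-paper proof to compare against. Your outline reproduces the standard Murakami-type argument (forward: split the variation-of-constants identity by $\Pi^\txts+\Pi^\txtu$, replace the unstable boundary term by running the identity to time $T$ via the group property of $V$ on $E^\txtu$ and letting $T\to\infty$ using the uniform bound on $\|y_T\|_\B$; converse: recover the initial segment at $t=\sigma$, establish $\xi_t=y(t)$, then differentiate), and your justification of the double $n,T$-limit by the uniform exponential majorant is sound, so the proposal is essentially the intended proof.
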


\section{Proof}\label{sec:proof}

For the proof of Theorem~\ref{thm:stable_mfld} we will work almost solely with the case \(\kappa = 1\) as this needs the sharpest estimates. If \(\kappa \geq 2\), note that the integral \(\int_{-t}^{0} \frac{1}{|u(s)|^\kappa} ~\txtd s\)  has a lower order singularity as \(u \to 0\), so that the estimates and proofs above carry over to these cases in a suitably adapted version. In fact one can even relax the assumptions of the exponential decay of \(h\) and \(h'\) to be only of polynomial order.
 
As we want to utilize the gained hyperbolic structure of equation~\eqref{eq:maindelay} in comparison to equation~\eqref{eq:main},  we will show in the following that the function \(\cK\colon \B\times\R^+ \to \R^2\) given by 
\begin{equation} \cK(u, v, t) = f(u(0), v(0)) + g(u(0), v(0))h(\int_{-t}^{0}\frac{1}{|u(s)|}~\txtd s)
\label{eq:definitionF}
\end{equation} 
is continuously differentiable in \(\B\)-direction. This allows to decompose \[ \cK(u, v, t) = \txtD f(0,0) (u(0), v(0))^\top + R(u, v, t)\,, \] where \(R \colon \B\times \R^+ \to \R^2\) is a  remaining nonlinearity that satisfies \(R(0,0,t)= \txtD R(0,0,t) = 0 \). However, note that \(\cK\) is only well defined if \(\frac{1}{u}\) is integrable on intervals \([-t, 0]\). Thus, as a first step, we show that there is a continuous extension of \eqref{eq:definitionF} given by \eqref{eq:fulldefinitionF} below defined on the whole space \(\B\). Next, we prove that this function is differentiable with a continuous derivative. Finally, we give an estimate on the error \(\cK(\cdot, \cdot, t)-\txtD \cK(0,0,t)\) between the function \(\cK\) and its derivative \(\txtD \cK\), which will become useful later on. \bigskip

Since we are dealing with continuous functions, as long as \(u\) does not have a zero its inverse \(\frac{1}{u}\) is bounded from above on any interval \([-t, 0]\) for some fixed \(t \in \R^+\). Thus the integral \(\int_{-t}^{0}\frac{1}{|u(s)|}~\txtd s\) is finite and we can define \(\cK\) for those \(u\) by using equation~\eqref{eq:definitionF} directly. For all remaining \((u,v) \in \B\), i.e., when there is a \(s_0 \in \R^-\) such that \(u(s_0) = 0\) we set \(\cK(u, v, t) = f(u(0), v(0)) \) so that \(F\colon \B \times\R^+ \to \R^2\) is defined by~\eqref{eq:fulldefinitionF} below.
Here and in the following  functions/segments, that possess a zero, are called singular, while those which are entirely positive or negative, are called regular functions/segments. 
\begin{proposition}
	For each \(t>0\), the function \(~\cK(\cdot, \cdot, t) \colon \B \to \R^2~\) given by
	\begin{equation}\label{eq:fulldefinitionF} 
	\cK(u, v, t) =
	\begin{cases}
	f(u(0), v(0)) + g(u(0), v(0))h(\int_{-t}^{0}\frac{1}{|u(s)|}~\txtd s) & \text{if \(u\) is regular} \\
	f(u(0), v(0))  & \text{if \(u\) is singular}
	\end{cases}
	\end{equation} is a continuous extension of~\eqref{eq:definitionF}.
\end{proposition}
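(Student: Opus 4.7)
The plan is to treat the extension assertion as automatic and then verify continuity in two cases. That \eqref{eq:fulldefinitionF} extends \eqref{eq:definitionF} is immediate by construction: the two formulas agree on the set of regular segments, which is exactly the domain on which the original definition is sensible. Since $e^{\lambda\theta}\geq e^{-\lambda t}$ on $[-t,0]$, convergence in $\B$ dominates uniform convergence on $[-t,0]$ and the evaluation $\phi \mapsto \phi(0)$ is continuous from $\B$ to $\R^2$. On the open subset of segments that do not vanish on $[-t,0]$ this makes continuity of $\cK(\cdot,\cdot,t)$ routine: for such a reference $u$, the positive minimum $m := \min_{s \in [-t,0]}|u(s)|$ is eventually a lower bound for $|u_n|$, hence $1/|u_n| \to 1/|u|$ uniformly on $[-t,0]$, the integrals converge, and continuity of $f,g,h$ closes the case.

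The substantive step is continuity at a singular base point $(u,v)$, i.e.\ where $u$ has a zero $s_0 \in [-t,0]$. Let $(u_n, v_n) \to (u,v)$ in $\B$. Uniform convergence on $[-t,0]$ immediately yields $f(u_n(0), v_n(0)) \to f(u(0), v(0))$ and $|u_n(s_0)| \to 0$. If each $u_n$ is itself singular then $\cK(u_n, v_n, t) = f(u_n(0), v_n(0))$ by definition, and the claim follows. Otherwise I would exploit the $M$-Lipschitz condition built into $\B$ to majorize
\begin{equation*}
|u_n(s)| \;\leq\; |u_n(s_0)| + M|s - s_0| \qquad \text{for all } s \in [-t,0],
\end{equation*}
and integrate over a one-sided subinterval of length $\delta>0$ adjacent to $s_0$ and contained in $[-t,0]$ to obtain
\begin{equation*}
\int_{-t}^{0}\! \frac{\txtd s}{|u_n(s)|} \;\geq\; \int_{0}^{\delta}\! \frac{\txtd r}{|u_n(s_0)| + Mr} \;=\; \frac{1}{M}\ln\!\Bigl(1 + \tfrac{M\delta}{|u_n(s_0)|}\Bigr) \;\longrightarrow\; \infty.
\end{equation*}
Because $h$ decays exponentially at infinity by \eqref{eq:assumption} and $g(u_n(0), v_n(0))$ is bounded, the second summand $g(u_n(0), v_n(0))\, h\!\bigl(\int_{-t}^{0} |u_n(s)|^{-1}\, \txtd s\bigr)$ vanishes in the limit, giving $\cK(u_n, v_n, t) \to f(u(0), v(0)) = \cK(u, v, t)$.

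The main obstacle is precisely this mixed case: regular approximants accumulating on a singular limit, where $1/|u_n|$ might a priori behave wildly on a shrinking neighbourhood of $s_0$ and spoil continuity across the two branches of \eqref{eq:fulldefinitionF}. The $M$-Lipschitz constraint encoded in the definition of $\B$ is what unlocks the argument, pinning the integral to a divergence of at least logarithmic rate and thereby guaranteeing that the $g\cdot h$ term is always annihilated in the limit, so the two branches of the piecewise rule glue continuously.
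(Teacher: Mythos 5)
Your argument is correct and rests on exactly the same mechanism the paper uses: the $M$-Lipschitz constraint built into $\B$ forces the singular integral to diverge at least logarithmically, and the exponential decay of $h$ then annihilates the $g\cdot h$ term, so the two branches of \eqref{eq:fulldefinitionF} glue continuously. The one place your execution differs is in how you drive $\int_{-t}^{0}|u_n(s)|^{-1}\,\txtd s$ to infinity. You apply the Lipschitz bound directly to the approximants $u_n$ and obtain the explicit lower bound $\tfrac{1}{M}\ln\bigl(1 + M\delta/|u_n(s_0)|\bigr)$, which blows up because $u_n(s_0)\to u(s_0)=0$ by uniform convergence on $[-t,0]$. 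The paper instead applies the Lipschitz bound to the limit $u$ to conclude $\int 1/|u|=\infty$, passes without loss of generality to an approximating sequence with $\|u_n-u\|_\B$ monotone, compares $1/|u_n|$ with $1/(|u|+\txte^{\lambda t}\|u_n-u\|_\B)$, and invokes the monotone convergence theorem. Your route is a bit more self-contained---it avoids both the reduction to a monotone subsequence and the appeal to monotone convergence, and it gives a quantitative divergence rate; the paper's quantitative version of the same logarithmic estimate only appears later, in the differentiability proof (cf.\ \eqref{eq:logEstimate2}). Both arguments are sound; the difference is one of packaging rather than of underlying idea.
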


\begin{proof}
Let \(u \in \B \) be singular and \(s_0 \in \R^-\) such that  \(u(s_0)= 0 \). Since all functions in \(\B\) are \(M\)-Lipschitz continuous, we can estimate \(|u(s)| \leq M |s-s_0| \) and thus \[\int_{-t}^{0} \frac{1}{|u(s)|} ~\txtd s  \,\geq\, \int_{-t}^{0} \frac{1}{M|s-s_0|} ~\txtd s  \,=\, \infty\,.\]

Now consider an approximating sequence \(u_n\) of \(u\) in \(\B\) and assume without loss of generality that \(\|u_n -u\|_{\B}\) is monotonically decreasing.  Note that for all \(s\in [-t,0]\) we have \(|\phi(s)|\leq \txte^{\lambda t} \|\phi\|_{\B} \) for any \(\phi \in \B\). Hence we can estimate \[ |u_n(s)| \leq |u(s)| + \txte^{\lambda t} \|u_n - u\|_{\B} \,,\] which implies  \[ \frac{1}{|u_n(s)|} \geq \frac{1}{|u(s)| + \txte^{\lambda t} \|u_n - u\|_{\B}} \] and by the monotone convergence theorem we can deduce that \[ \lim\limits_{n\to \infty} \int_{-t}^{0} \frac{1}{|u_n(s)|} ~\txtd s  = \int_{-t}^{0} \frac{1}{|u(s)| }~\txtd s  = \infty  \,. \] Consequently, \(h(\int_{-t}^{0} \frac{1}{|u_n(s)|} ~\txtd s ) \to 0\) and since \(f, g\) are continuous as well, the continuity of \(\cK\) at singular arguments follows. 
\end{proof}

Next we tackle the differentiability of \(\cK(\cdot, \cdot, t)\). 

\begin{proposition}\label{prop:cont_diff}
	\(\cK(\cdot, \cdot, t)\) defined  by \eqref{eq:fulldefinitionF} is continuously differentiable
\end{proposition}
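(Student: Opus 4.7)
The plan is to split $\cK = \cK_1 + \cK_2$ where $\cK_1(u,v) := f(u(0), v(0))$ and $\cK_2(u,v,t)$ is the $g \cdot h$-summand (extended by $0$ on singular $u$), and to handle the two pieces separately. Since the evaluation $\phi \mapsto \phi(0)$ is a bounded linear operator $\B \to \R^2$ and $f$ is smooth, $\cK_1$ is immediately $C^1$; all difficulty is concentrated in $\cK_2$, whose continuity was already established in the preceding proposition. Writing $I(u) := \int_{-t}^0 |u(s)|^{-1}\, \txtd s$, I first treat regular arguments. If $u^*$ is bounded away from zero on $[-t,0]$ then a whole small $\B$-neighbourhood consists of regular functions, and differentiating under the integral gives
\[ \txtD I(u^*)[\phi] = -\int_{-t}^0 \frac{\mathrm{sgn}(u^*(s))\, \phi(s)}{u^*(s)^2}\, \txtd s, \]
a bounded linear functional; composing smoothly with $h$ and $g$ via the chain rule yields $\txtD \cK_2(u^*, v^*)$, and continuity of this derivative among regular arguments is routine.

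At a singular $u^*$, I expect $\txtD \cK_2(u^*,v^*) = 0$. For perturbations $(\phi, \psi)$ such that $u^* + \phi$ is still singular, $\cK_2(u^* + \phi, v^* + \psi, t) = 0$ identically. For $u^* + \phi$ regular with minimum value $m := \min_{[-t,0]}|u^* + \phi|$, the $M$-Lipschitz property of elements of $\B$ yields $|(u^* + \phi)(s)| \leq m + M|s - s_m|$ for the minimiser $s_m$, so
\[ I(u^* + \phi) \geq \tfrac{1}{M}\log\!\left(1 + \tfrac{M a}{m}\right) \]
for a constant $a > 0$ depending only on $t$. The exponential bound $|h(\tau)| \leq H\, \txte^{-\eta \tau}$ then gives $|h(I(u^* + \phi))| \lesssim m^{\eta/M}$. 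Since $u^*$ vanishes at some point $s_0 \in \R^-$, one also has $m \leq |\phi(s_0)| \leq \txte^{\lambda t}\|\phi\|_\B$, and the assumption $M < \eta$ ensures $|\cK_2(u^* + \phi, v^* + \psi, t)| = o(\|(\phi, \psi)\|_\B)$, confirming that $0$ is indeed the Fréchet derivative.

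The hard part will be the continuity of $\txtD \cK_2$ at a singular $u^*$. For a sequence $u_n \to u^*$ consisting of regular functions one must show that the operator norms of $\txtD \cK_2(u_n, v_n)$ on $\B \times \B$ tend to $0$. The summand $\txtD g(u_n(0), v_n(0))[\cdot]\, h(I(u_n))$ is controlled by $h(I(u_n)) \to 0$. The delicate summand is $g(u_n(0), v_n(0))\, h'(I(u_n))\, \txtD I(u_n)$, whose operator norm is of order $|h'(I(u_n))| \cdot \txte^{\lambda t} \int_{-t}^0 u_n(s)^{-2}\, \txtd s$. Here the exponential decay $|h'(I(u_n))| \lesssim m_n^{\eta/M}$ must beat the blow-up $\int u_n^{-2} \lesssim t/m_n^2$, giving a composite bound of order $m_n^{\eta/M - 2}$. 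Making this vanish as $m_n \to 0$ is the key quantitative requirement and forces one to exploit the freedom (noted in the text) to shrink the working domain so that $M$ is sufficiently small relative to $\eta$. For $\kappa \geq 2$ the corresponding integral grows like $m^{1-\kappa}$ rather than $-\log m$, which loosens the trade-off considerably and is why the proof can proceed uniformly in $\kappa$ once the $\kappa = 1$ case has been handled.
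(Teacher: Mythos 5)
Your decomposition $\cK = \cK_1 + \cK_2$, the chain-rule differentiation at regular arguments, and the proof of Fr\'echet differentiability at singular $u^*$ all follow the paper's argument essentially verbatim: the logarithmic lower bound on $I(u^*+\phi)$ and the resulting decay $|h(I(u^*+\phi))|\lesssim m^{\eta/M}$ with $m\lesssim\|\phi\|_\B$ are precisely the paper's estimates (the paper works with a fixed zero $s_0$ of $u^*$ rather than the minimiser of $|u^*+\phi|$, but the exponent is the same).

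The gap is in the continuity of $\txtD\cK_2$ at a singular $u^*$. Your estimate $\int_{-t}^0|u_n(s)|^{-2}\,\txtd s \le t\,m_n^{-2}$ discards a power of $m_n$. The paper instead uses the pointwise inequality $|u_n(s)|^{-2} \le m_n^{-1}|u_n(s)|^{-1}$, so that $\int_{-t}^0|u_n(s)|^{-2}\,\txtd s \le m_n^{-1}\,I(u_n)$; combining with the decay of $h'$ gives a bound of order $m_n^{\eta/M-1}$ (up to a subdominant logarithmic factor from $I(u_n)$), which vanishes already under the standing assumption $M<\eta$ of Section~\ref{sec:setting_main_result}. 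Your crude bound would require $\eta/M>2$. You propose to secure this stronger inequality by shrinking the spatial domain so that $M$ becomes small; that is in principle available (since $\cK$ is continuous and vanishes at the origin, $\sup_{B_\delta(0)}\|\cK\|\to 0$ as $\delta\to 0$), but you do not verify it, it replaces the paper's hypothesis by a strictly stronger one, and a change to $M$ propagates into the definition of $\B$ and the constants in the Lyapunov--Perron estimates. The cleaner move is the paper's pointwise bound $|u_n|^{-2}\le m_n^{-1}|u_n|^{-1}$, which costs nothing, avoids any retuning of $M$, and is the same gain re-used verbatim in Proposition~\ref{prop:estimate}.
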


\begin{proof}
Using the product and chain rule one easily finds the derivative at regular arguments \((u, v) \in \B\) to be 
\begin{multline}\label{eq:derivative}
[\txtD \cK(u, v, t)] (\phi,\psi) = \txtD f(u(0), v(0))(\phi(0), \psi(0))^\top + \txtD g(u(0), v(0))h\!\left(\int_{-t}^{0} \frac{1}{|u(s)|} ~\txtd s \!\right)(\phi(0), \psi(0))^\top  \\+g(u(0), v(0))h'\!\left(\int_{-t}^{0} \frac{1}{|u(s)|} ~\txtd s \!\right) \int_{-t}^{0} \frac{-\phi(s)}{|u(s)|u(s)} ~\txtd s \,. 
\end{multline}
Furthermore we claim that the derivative at a singular function \(u\) is given by 
\begin{equation*}
[\txtD \cK(u, v, t)] (\phi,\psi) = \txtD f(u(0), v(0))(\phi(0), \psi(0))^\top    .\end{equation*}

In order to show 
\begin{equation}\label{eq:differentiability}
\lim\limits_{(w,z)\to 0}  \frac{1}{\|(w,z)\|_\B}  \big\|  \cK(u+w,v+z,t) - \cK(u,v,t) - Df(u(0), v(0)) (w(0),z(0))^\top   \big\|_\B  = 0   
\end{equation}
we distinguish two cases. If \(u+w\) is a singular function, expression \eqref{eq:differentiability} simplifies to 
\begin{equation*}
\lim\limits_{(w,z)\to 0}  \frac{1}{\|(w,z)\|_\B}  \big\|  f(u(0)+w(0),v(0)+z(0)) - f(u(0),v(0)) - Df(u(0), v(0)) (w(0),z(0))^\top   \big\|_\B  
\end{equation*}
  which vanishes due to the differentiability of \(f\).  
  For the case of a regular function \(u+w\) the limit becomes
  \begin{multline*}
  \lim\limits_{(w,z)\to 0}  \frac{1}{\|(w,z)\|_\B}  \Big\| f(u(0)+w(0), v(0)+z(0)) + g(u(0)+w(0), v(0)+z(0)) h\Big(\int_{-t}^{0}\frac{1}{|u(s)+w(s)|}~\txtd s\Big)\\ - f(u(0),v(0)) - Df(u(0), v(0)) (w(0),z(0))^\top \Big\|_\B \,. 
  \end{multline*}
  Here the first and the last two terms vanish due to the differentiability of \(f\),  so it remains to show that \[ \lim\limits_{(w,z)\to 0}  \frac{1}{\|(w,z)\|_\B} \Big\|g(u(0)+w(0), v(0)+z(0)) h\Big(\int_{-t}^{0}\frac{1}{|u(s)+w(s)|}~\txtd s \Big) \Big\|_\B = 0 \,.\] 
  Note that \(g(u(0)+w(0), v(0)+z(0))\) is uniformly bounded for all sufficiently small \((w,z)\) so we are left with \begin{equation*}
\lim\limits_{\|w\|_{\B} \to 0} \frac{1}{\|(w,z)\|_{\B}} \Big\| h\Big(\int_{-t}^{0} \frac{1}{|u(s)+w(s)|} ~\txtd s\Big) \Big\| = 0 ~.
\end{equation*} 
We use the Lipschitz continuity of \(u\) and \(w\) to estimate \[|u(s)+ w(s)| \leq |w(s_0)| + M |s-s_0|\] (recall that \(u(s_0) = 0\)) and thus obtain \[ \int_{-t}^{0} \frac{1}{|u(s)+w(s)|} ~\txtd s \geq \int_{-t}^{0} \frac{1}{|w(s_0)|+M |s-s_0|}~\txtd s \geq \frac{-1}{M}\ln\Bigg(\frac{|w(s_0)|}{|w(s_0)|+ M\tfrac{t}{2} }\Bigg)  .\] Together with the exponential estimate \eqref{eq:assumption} on \(h\) this yields
\begin{align*}
&\frac{1}{\|(w,z)\|_{\B}} \Big\| h\Big(\int_{-t}^{0} \frac{1}{|u(s)+w(s)|} ~\txtd s\Big)  \Big\| \leq  \frac{1}{w(s_0)\txte^{\lambda s_0}} H \cdot \exp\Bigg(-\eta ~ \int_{-t}^{0} \frac{1}{|u(s)+w(s)|} ~\txtd s\Bigg) \\ &\leq \frac{1}{w(s_0)\txte^{\lambda s_0}} H \cdot \exp\Bigg(-\eta ~\frac{-1}{M}\ln\Bigg(\frac{|w(s_0)|}{|w(s_0)|+M\tfrac{t}{2}}\Bigg)\Bigg) = \frac{H}{w(s_0)\txte^{\lambda s_0}} \Bigg(\frac{|w(s_0)|}{|w(s_0)|+ M\tfrac{t}{2} } \Bigg)^\frac{\eta}{M} ~.
\end{align*}   
Now if \(\|(w,z)\|_{B} \to 0\) also \(w(s_0) \to 0\) and since \(\frac{\eta}{M} > 1\) the last expression converges to 0.

The continuity of the derivative given in \eqref{eq:derivative} at regular functions and in the second argument is apparent, so that again only the singular points are of special interest. As before we consider a singular \(u\) with \(u(s_0) = 0\) as well as a perturbation \(u + w\) thereof. 
It is sufficient to show that \[ h'\Big(\int_{-t}^{0} \frac{1}{|u(s) +w(s)|} ~\txtd s \Big) \int_{-t}^{0} \frac{1}{|u(s) +w(s)|^2} ~\txtd s  \] converges to 0 as \(\|(w,z)\|_{\B} \to 0\), since the other parts are already known to be continuous: \(f \text{ and } g\) by assumption and \(h(...)\) by the arguments below \eqref{eq:fulldefinitionF}.

 Let now \({s_1 := \arg \min_{s\in [-t,0]} |u(s)+w(s)|}\). Note that now, in contrast to the previous situation, \(s_1\) depends on the choice of \(w\). Nevertheless, we can estimate in a similar manner to before 
\begin{equation}\label{eq:logEstimate2}
\int_{-t}^{0} \frac{1}{|u(s)+w(s)|} ~\txtd s \geq \int_{-t}^{0} \frac{1}{|u(s_1)+w(s_1)|+M |s-s_1|}~\txtd s = \frac{-1}{M}\ln\Bigg(\frac{|u(s_1)+w(s_1)|}{|u(s_1)+w(s_1)|+M\tfrac{t}{2}}\Bigg) .
\end{equation}
Furthermore 
\begin{align} \label{eq:estimateHprime}
& h'\Big(\int_{-t}^{0} \frac{1}{|u(s) +w(s)|} ~\txtd s \Big) \int_{-t}^{0} \frac{1} {|u(s) +w(s)|^2} ~\txtd s  \nonumber \\ \leq ~ & h'\Big(\int_{-t}^{0} \frac{1}{|u(s) +w(s)|} ~\txtd s \Big) \int_{-t}^{0} \frac{1}{|u(s) +w(s)|} ~\txtd s \cdot \frac{1}{|u(s_1) +w(s_1)|}  \nonumber \\ \leq ~ & 
H \cdot \exp\Big( -\eta \int_{-t}^{0} \frac{1}{|u(s) +w(s)|} ~\txtd s \Big) \cdot \frac{1}{|u(s_1) +w(s_1)|} 
\end{align}
due to the estimate \eqref{eq:assumption}. Analogous to above we now exploit the estimate \eqref{eq:logEstimate2} to continue
\begin{align*}
& H \cdot \exp\Big( -\eta \int_{-t}^{0} \frac{1}{|u(s) +w(s)|} ~\txtd s \Big) \cdot \frac{1}{|u(s_1) +w(s_1)|} \\ \leq ~ &
H \cdot \exp\Bigg( \frac{\eta}{M}\ln\Bigg(\frac{|u(s_1)+w(s_1)|}{|u(s_1)+w(s_1)|+M\tfrac{t}{2}} \Bigg) \Bigg)  \cdot \frac{1}{|u(s_1) +w(s_1)|} \\ = ~ &
H \cdot  \Bigg(\frac{|u(s_1)+w(s_1)|}{|u(s_1)+w(s_1)|+M\tfrac{t}{2}} \Bigg) ^{\frac{\eta}{M}} \cdot \frac{1}{|u(s_1) +w(s_1)|}.
\end{align*}
Now, as \(\|(w,z)\|_{\B} \to 0\) also \(|w(s_0)| \to 0\) and thus also  
\[|w(s_0)| = |u(s_0)+ w(s_0)| \geq \min_{s\in [-t,0]} |u(s)+ w(s)| = |u(s_1)+ w(s_1)|  \]  converges towards 0. Since \(\frac{\eta}{M} > 1 \), like before  the term \(H \cdot  \Bigg(\frac{|u(s_1)+w(s_1)|}{|u(s_1)+w(s_1)|+M\tfrac{t}{2}} \Bigg) ^{\frac{\eta}{M}} \cdot \frac{1}{|u(s_1) +w(s_1)|}\) is convergent towards 0 as well.
\end{proof}
 
Finally we will give a uniform estimate on the derivative of \(\cK(\cdot, \cdot, t)-\txtD \cK(0,0,t) =: R(\cdot, \cdot, t)\), and therewith the Lipschitz constant of \(R(\cdot, \cdot, t)\),  in a neighbourhood of \(0 \in \B\) for all \(t \in [\sigma_0, \infty)\).

\begin{proposition} \label{prop:estimate}
	Let \(\sigma_0 > 0 \). There is a function \(\zeta \colon  [0,\delta_0] \to \R^+\) for some \(\delta_0 > 0\) with \(\lim_{\delta \to 0} \zeta(\delta) = 0\) such that  \(\|\txtD \cK(u, v, t) - \txtD \cK(0,0,t)\|  \leq  \zeta(\delta)\) for all  \((u,v) \in \B\) with \(\|(u,v)\|_{\B} < \delta\) and all \(t \geq \sigma_0\).  Consequently, \(\zeta(\delta)\) is the Lipschitz constant of \(R(\cdot, \cdot, t)\)  in \(B_\delta(0)\) for all \(t\geq \sigma_0\).
\end{proposition}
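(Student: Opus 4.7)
The explicit formula for $\txtD \cK(u,v,t)$ from Proposition~\ref{prop:cont_diff} decomposes the difference $[\txtD \cK(u,v,t) - \txtD \cK(0,0,t)](\phi,\psi)$, for a test direction $(\phi,\psi)\in\B$ with $\|(\phi,\psi)\|_\B \le 1$, into at most three pieces:
\textbf{(A)} $\bigl(\txtD f(u(0),v(0)) - \txtD f(0,0)\bigr)(\phi(0),\psi(0))^\top$,
\textbf{(B)} $\txtD g(u(0),v(0))\,h(I)\,(\phi(0),\psi(0))^\top$, and
\textbf{(C)} $g(u(0),v(0))\,h'(I)\int_{-t}^0 -\phi(s)/(|u(s)|\,u(s))\,\rmd s$,
where $I := \int_{-t}^0 |u(s)|^{-1}\,\rmd s$ and pieces (B), (C) are absent when $u$ is singular. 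My plan is to bound each piece separately by a function of $\delta$ that tends to $0$ as $\delta \to 0$ \emph{uniformly} for all $t \ge \sigma_0$, and to take $\zeta(\delta)$ as the sum of the three bounds.

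Piece (A) is immediate: since the $\B$-norm dominates the Euclidean norm at $\theta=0$, one has $|u(0)|,|v(0)| \leq \|(u,v)\|_\B \leq \delta$, so (A) is controlled by a modulus of continuity $\omega_{\txtD f}(\delta)$ of $\txtD f$ at $(0,0)$, which is entirely independent of $t$. For (B), I reuse the logarithmic estimate already exploited in the proof of Proposition~\ref{prop:cont_diff}: with $m := \min_{s\in[-t,0]}|u(s)|$ and a minimizer $s_1\in[-t,0]$, $M$-Lipschitz continuity yields $|u(s)| \le m + M|s-s_1|$, hence $I \ge \tfrac{1}{M}\ln(1 + Mt/(2m))$; combined with $|h(I)| \leq H\,\txte^{-\eta I}$ this gives $|h(I)| \leq H\,(m/(m+Mt/2))^{\eta/M}$. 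Using monotonicity in $m$ and $t$ together with $m \leq |u(0)| \leq \delta$ and $t \geq \sigma_0$ produces the uniform bound $C_g H\,(\delta/(\delta + M\sigma_0/2))^{\eta/M}$, which vanishes as $\delta \to 0$.

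Piece (C) is the main obstacle, because of the second-order singular integrand $|u|^{-2}$. The plan is: (i) use Lipschitz continuity of $\phi \in \B$ (with $\|\phi\|_\B \le 1$) to bound $|\phi(s)| \leq 1 + Mt$ on $[-t,0]$; (ii) pull out $m^{-1}$ to obtain $\int_{-t}^0 |u|^{-2}\,\rmd s \leq I/m$; (iii) invert the logarithmic lower bound on $I$ into $1/m \leq 2\txte^{MI}/(Mt)$; (iv) combine with $|h'(I)| \leq H\,\txte^{-\eta I}$ to arrive at
\[ |\text{(C)}| \;\le\; \frac{2 C_g H\,(1+Mt)}{Mt}\cdot I\,\txte^{(M-\eta)I}. \]
The prefactor $(1+Mt)/t = 1/t + M$ is bounded by $1/\sigma_0 + M$ for every $t \geq \sigma_0$, while the structural assumption $M < \eta$ (built into the choice of $\B$) ensures that $x \mapsto x\,\txte^{(M-\eta)x}$ is eventually decreasing with limit $0$ at infinity. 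Since $I \ge I_0(\delta) := \tfrac{1}{M}\ln(1 + M\sigma_0/(2\delta)) \to \infty$ as $\delta \to 0$, the factor $I\,\txte^{(M-\eta)I}$ is, for $\delta$ sufficiently small, dominated by $I_0(\delta)\,\txte^{(M-\eta)I_0(\delta)}$, which tends to $0$ uniformly in $t \geq \sigma_0$. Summing (A), (B), (C) defines $\zeta(\delta)$ with the required property. The Lipschitz statement for $R(\cdot,\cdot,t)$ then follows from the mean value inequality for Fr\'echet-differentiable maps between Banach spaces, applied on the convex ball $B_\delta(0)\subset\B$, since by construction $\txtD R(u,v,t) = \txtD \cK(u,v,t) - \txtD \cK(0,0,t)$.
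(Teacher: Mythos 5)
Your proposal follows the paper's strategy closely: decompose $\txtD\cK(u,v,t)-\txtD\cK(0,0,t)$ into the same three pieces, use the $M$-Lipschitz property of $u$ to derive the logarithmic lower bound $I\geq \tfrac{1}{M}\ln(1+Mt/(2m))$ with $m=\min|u|$, pass from $m$ to $\delta$ and from $t$ to $\sigma_0$ by monotonicity, and invoke $\eta>M$. Where you differ from the paper is in two respects, both of which make your version tighter. First, the paper's chain of inequalities in~\eqref{eq:estimateHprime} silently drops the factor $I$ that arises from $\int|u|^{-2}\,\rmd s\leq I/m$ (going from $h'(I)\cdot I\cdot m^{-1}$ directly to $He^{-\eta I}m^{-1}$); you keep this factor and dispose of it correctly by observing that $x\mapsto xe^{(M-\eta)x}$ is eventually decreasing, so $Ie^{(M-\eta)I}\leq I_0(\delta)e^{(M-\eta)I_0(\delta)}\to 0$. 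Second, the paper's displayed bound suppresses the test direction $\phi$ entirely (writing $\|g\,h'(I)\int|u|^{-2}\|$ as if the operator acted on a direction with $|\phi(s)|\leq 1$), while you make the $\phi$-dependence explicit, bound it by $1+Mt$ via Lipschitz continuity of $\phi$, and absorb the resulting $(1+Mt)$ through the $1/m\leq 2e^{MI}/(Mt)$ step. A small typo: your display has the prefactor $(1+Mt)/(Mt)$ but the following sentence refers to $(1+Mt)/t$; this does not affect the argument. One caveat worth flagging, though it is inherited from the paper rather than introduced by you: the bound $|\phi(s)|\leq 1+Mt$ requires the test direction $\phi$ to be $M$-Lipschitz with $|\phi(0)|\leq 1$, which is not the same as $\|\phi\|_{\B}\leq 1$ — the set of $M$-Lipschitz functions is a cone, not a subspace, and normalizing $\phi=u_1-u_2$ to unit $\B$-norm can blow up its Lipschitz constant. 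The paper faces exactly the same issue (by replacing $|\phi(s)|$ with $1$), so this is not a defect of your proof relative to theirs, but you should be aware that the Lipschitz conclusion for $R$ via the mean value inequality tacitly restricts the directions over which the ``operator norm'' of $\txtD R$ is taken.
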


\begin{proof}
Considering \eqref{eq:estimateHprime} with the special choice \(u = 0\) we obtain 
\begin{align*}
& h'\Big(\int_{-t}^{0} \frac{1}{|w(s)|} ~\txtd s \Big) \int_{-t}^{0} \frac{1} {|w(s)|^2} ~\txtd s  ~ \leq ~ 
H \cdot \exp\Big( -\eta \int_{-t}^{0} \frac{1}{|w(s)|} ~\txtd s \Big) \cdot \frac{1}{|w(s_1)|} \\  ~\leq~& H \cdot  \Bigg(\frac{|w(s_1)|}{|w(s_1)|+M\tfrac{t}{2}} \Bigg) ^{\frac{\eta}{M}} \cdot \frac{1}{|w(s_1)|} \leq H \cdot  \Bigg(\frac{|w(s_1)|}{|w(s_1)|+M\tfrac{\sigma_0}{2}} \Bigg) ^{\frac{\eta}{M}} \cdot \frac{1}{|w(s_1)|}  =: \gamma(|w(s_1)|)
\end{align*}
Note that there is some \(r_0\) such that \(\gamma(r)\) converges monotonically towards 0 as \(r \to 0\) for all \(r \in [0, r_0]\). Further note that \(\|(w,z)\|_{\B} \geq |w(0)| \geq |w(s_1)|\) so \(\gamma(\|(w,z)\|_{\B})\) gives an upper bound of the term \(h'\Big(\int_{-t}^{0} \frac{1}{|w(s)|} ~\txtd s \Big) \int_{-t}^{0} \frac{1} {|w(s)|^2} ~\txtd s  \). \medskip

We have for all \((u,v) \in \B\) (regular as well as singular functions)
\begin{align*}
&\|\txtD \cK(u, v, t) - \txtD \cK(0,0,t)\|  \leq  \|\txtD f(u(0), v(0)) -\txtD f(0,0)\| \\ &+
\Big\| \txtD g(u(0), v(0))h\Big(\int_{-t}^{0} \frac{1}{|u(s)|} ~\txtd s \Big) \Big\| + \Big\| g(u(0), v(0))h'\Big(\int_{-t}^{0} \frac{1}{|u(s)|} ~\txtd s \Big) \int_{-t}^{0} \frac{1}{|u(s)|^2} ~\txtd s \Big\| 
\end{align*}

Now, the first term converges to 0 as \((u,v)\to 0\) since \(f\) is continuously differentiable and the second term by the estimate on \(h\Big(\int_{-t}^{0} \frac{1}{|u(s)|} ~\txtd s \Big)\) in Proposition~\ref{prop:cont_diff}. Finally, the last term is bounded by a multiple of \(\gamma(\|(u,v)\|_{\B})\) by our considerations above. Consequently, there exists some \(\delta_0 >0\) and a function \(\zeta \colon  [0,\delta_0] \to \R^+\) satisfying \(\zeta(\delta) \to 0\) such that \(\|\txtD \cK(u, v, t) - \txtD \cK(0,0,t)\|  \leq  \zeta(\delta)\) for all \((u,v) \in \B\) with \(\|(u,v)\|_{\B} < \delta\).
\end{proof}\medskip

We now are able to apply the tools of Section~\ref{sec:preliminaries} on our vector field $$\cK(u, v, t) =\txtD \cK(0,0,t) +R(u,v,t) = \txtD f(0,0) (u(0), v(0))^\top + R(u, v, t) $$ and consider Lyapunov-Perron operators. \medskip

\begin{proof}(of Theorem~\ref{thm:stable_mfld})
	
From Proposition~\ref{prop:estimate} we know that \(\|R(u_1, v_1, t) - R(u_2, v_2, t) \| \leq \zeta(\delta) \| (u_1, v_1) -(u_2, v_2)\|_{\B}  \) holds for all \((u_1, v_1), (u_2, v_2) \in B_{\B}(\delta)\) uniformly for all \(t \geq \sigma_0 \), where we use \(B_{A}(r) \) to denote the closed ball of radius \(r\) in the space \(A\) centered at the origin. Since \(R(0,0, t) = 0\) for all \(t\), this especially yields the estimate 
\begin{equation} \label{eq:estimateLipschitz}
	\|R(u, v, t)\| \leq \zeta(\delta) \| (u, v)\|_\B
\end{equation}
 for all \((u, v) \in B_{\B}(\delta) \). \medskip
 
  Now take some \(0 < \beta < \alpha\) and consider the spaces \(Y_\sigma= BC^{\beta}([\sigma,\infty), \B ) \text{ and }\tilde{Y}_\sigma= BC^{\beta}([\sigma, \infty), \tilde{\B})\), where for \(S=\B\) or \(S=\tilde{\B}\) we define the space $$BC^\beta([\sigma,\infty), S) := \{ y \in C([\sigma,\infty), S) \,,~ \sup_{\theta \in [\sigma,\infty)} \txte^{\beta (\theta-\sigma)} \|y(\theta)\|_S  < \infty\}$$ as the Banach space of continuous, exponentially bounded functions, that is endowed with the norm \(\|y\|_{BC^\beta([\sigma,\infty), S)}= \sup_{\theta \in [\sigma,\infty)} \txte^{\beta (\theta-\sigma)} \|y(\theta)\|_S\)\,.\bigskip

For fixed \(r>0\), \(\delta \in [0,\delta_0]\) and every \(\sigma \in [\sigma_0, \infty)\) let us define \(\F_\sigma \colon  B_{E^{\txts}}(r) \times B_{Y_\sigma}(\delta) \to Y_\sigma\) by
\[ \F_\sigma(\psi, y)(t) = V(t-\sigma)\psi + \lim\limits_{n \to \infty}
\int_{\sigma}^{t} V(t-s) \Pi^{\txts} \Gamma^n R(y(s),s) ~\txtd s - \lim\limits_{n \to \infty} \int_{t}^{\infty} V(t-s) \Pi^{\txtu} \Gamma^n R(y(s),s) ~\txtd s  \]
for all \(t \geq \sigma\). We show next, that \(\F_\sigma(\psi, \cdot)\)  is a well defined self-mapping and a contraction for every \(\psi\in B_{E^s}(r) , \sigma\in[\sigma_0, \infty)\) with the aim of applying the contraction mapping theorem/Banach fixed point theorem. \medskip

First, using the estimates  \eqref{eq:exponential_estimate}, \eqref{eq:estimateProjectionNorm}, \eqref{eq:estimateGamma} and \eqref{eq:estimateLipschitz}, we obtain
\begin{multline*}
	\Big\| \lim\limits_{n \to \infty}
	\int_{\sigma}^{t} V(t-s) \Pi^{\txts} \Gamma^n R(y(s),s) ~\txtd s  \Big\|_{\B} \leq C K L\, \zeta(\delta) \int_{\sigma}^{t} \txte^{-\alpha(t-s)} \|y(s)\|_B ~\txtd s \\ \leq  C K L \, \zeta(\delta) \int_{\sigma}^{t} \txte^{-\alpha(t-s)} \|y\|_{Y_\sigma} \txte^{-\beta (s-\sigma)} ~\txtd s \leq  C K L \, \zeta(\delta)\tfrac{1}{\alpha-\beta}\|y\|_{Y_\sigma} \txte^{-\beta (t-\sigma)}
\end{multline*}
as well as 
\begin{multline*}
	\Big\| \lim\limits_{n \to \infty} \int_{t}^{\infty} V(t-s) \Pi^{\txtu} \Gamma^n R(y(s),s) ~\txtd s  \Big\|_{\B} \leq CK L \, \zeta(\delta) \int_{\sigma}^{t} \txte^{-\alpha(t-s)} \|y(s)\|_B ~\txtd s \\\leq C K L \, \zeta(\delta) \int_{t}^{\infty} \txte^{\alpha(t-s)} \|y\|_{Y_\sigma} \txte^{-\beta (s-\sigma)} ~\txtd s  \leq  C K L \, \zeta(\delta)\tfrac{1}{\alpha+\beta}\|y\|_{Y_\sigma} \txte^{-\beta (t-\sigma)}
\end{multline*}

for all \(t \geq \sigma\), which allows to conclude that 
\begin{align*}
	\|\F_\sigma(\psi, y)(t)\|_{\B} &\leq  \left\|V(t-\sigma)\psi\right\|_{\B} +   C K L \, \zeta(\delta)\tfrac{1}{\alpha-\beta}\|y\|_{Y_\sigma} \txte^{-\beta (t-\sigma)} +   C K L \, \zeta(\delta)\tfrac{1}{\alpha+\beta}\|y\|_{Y_\sigma} \txte^{-\beta (t-\sigma)} \\
	&\leq Ce^{\alpha(\sigma-t) } \|\psi\|_\B  +   C K L \, \zeta(\delta)\left(\tfrac{1}{\alpha-\beta} +\tfrac{1}{\alpha+\beta}\right)\|y\|_{Y_\sigma} \txte^{-\beta (t-\sigma)} \\
	&\leq \txte^{-\beta (t-\sigma)} \left( C \txte^{(\alpha-\beta)(\sigma - t)} r  + C K L \, \zeta(\delta)\left(\tfrac{1}{\alpha-\beta} +\tfrac{1}{\alpha+\beta}\right)\delta  \right) \,.
\end{align*}
Consequently we have \begin{equation}\label{eq:bound_F}
	\|\F_\sigma(\psi, y)\|_{Y_\sigma}  =\! \sup_{t \in [\sigma, \infty)} \txte^{-\beta (t-\sigma)}  \|\F_\sigma(\psi, y)\|_{\B} \,\leq\, C \txte^{(\alpha-\beta)(\sigma - t)} r  + C K L \, \zeta(\delta)\left(\tfrac{1}{\alpha-\beta} +\tfrac{1}{\alpha+\beta}\right)\delta \,. \end{equation}

Since \(\zeta(\delta) \to 0\) as \(\delta \to 0\), a sufficiently small choice of \(\delta\) yields \(C K L \, \zeta(\delta)(\tfrac{1}{\alpha-\beta} + \tfrac{1}{\alpha+\beta}) < \frac{1}{2}\,, \)  so that taking \(r<\frac{\delta}{2C}\) implies that  \( \|\F_\sigma(\psi, y)(t)\|_{\B} \leq \delta \). 
Hence \(\F_\sigma\) maps into \(B_{\tilde{Y}_\sigma}(\delta)\).

It remains to verify that the function \(\F_\sigma(\psi, y)(t)(\cdot): \R^- \to X\) is \(M\)-Lipschitz continuous. Since \(\F_\sigma(\psi, y)(t)(\theta) = \F_\sigma(\psi, y)(t+\theta)(0)\) is a solution of the linear inhomogeneous equation \({x'=\txtD f(0,0)x+R(y(t),t)}\) and  the right hand side is bounded by \(M\) in the considered neighbourhood of 0 due to our assumption in Section~\ref{sec:setting_main_result}, the \(M\)-Lipschitz continuity follows directly. \medskip

Finally, proceeding similarly to the estimate~\eqref{eq:bound_F}, we estimate the Lipschitz constant of \(\F_\sigma(\psi, \cdot)\) as follows 
\begin{multline*}
	\| \F_\sigma(\psi, y_1) - \F_\sigma(\psi, y_2) \|_{Y_\sigma} = \sup_{t \in [\sigma, \infty)} \txte^{\beta (t-\sigma)} \|\F(\psi, y_1, \sigma)(t) - \F(\psi, y_1, \sigma)(t) \|_\B \\ \leq \sup_{t \in [\sigma, \infty)} \txte^{\beta (t-\sigma)}  \Big\| \lim\limits_{n \to \infty} \int_{\sigma}^{t} V(t-s) \Pi^{\txts} \Gamma^n \left(R(y_1(s),s)-R(y_2(s),s)\right) ~\txtd s  \Big\|_{\B} \\ + \sup_{t \in [\sigma, \infty)} \txte^{\beta (t-\sigma)}  \Big\| \lim\limits_{n \to \infty} \int_{t}^{\infty} V(t-s) \Pi^{\txtu} \Gamma^n \left( R(y_1(s),s)-R(y_2(s),s) \right) ~\txtd s  \Big\|_{\B}  \\ \leq C K L \, \zeta(\delta)(\tfrac{1}{\alpha-\beta} + \tfrac{1}{\alpha+\beta} ) \|y_1 - y_2\|_{Y_\sigma} ~.
\end{multline*}
Due to our choice of \(\delta\) above, the Lipschitz constant \(C K L \, \zeta(\delta)(\tfrac{1}{\alpha-\beta} + \tfrac{1}{\alpha+\beta} )\)  is less than \(\frac{1}{2}\). The contraction mapping theorem  yields a unique fixed point \(y^*_\sigma(\psi) \) in \(B_{Y_\sigma}(\delta) \), which satisfies $\F_\sigma(\psi, y^*_\sigma(\psi)) = y^*_\sigma(\psi)$  for every \(\psi \in B_{E^s}(r)\) and every \(\sigma \in [\sigma_0,\infty)\). Lemma~\ref{lemma:LemmaJap} yields the correspondence of this fixed point to a unique bounded solution. The map \({w \colon B_{E^s}(r) \times [\sigma_0,\infty) \to E^u}\), the graph of which forms the stable manifold, is therefore given  by \[w(\psi, \sigma) = \Pi^u\, [y^*_\sigma(\psi)](\sigma) = - \lim\limits_{n \to \infty} \int_{\sigma}^{\infty} V(-s) \Pi^{\txtu} \Gamma^n R([y_\sigma^*(\psi)](s), s) ~\text{ds}\] using the just obtained fixed point of \(\F_\sigma(\psi, \cdot)\)\,.\bigskip

Next, let us consider some \(\psi_1, \psi_2 \in B_{E^s}(r)\). We have 
\begin{align*}
	\|y^*_\sigma(\psi_1) - y^*_\sigma(&\psi_2)\|_{Y_\sigma} = \big\|\F_\sigma(\psi_1, y^*_\sigma(\psi_1)) - \F_\sigma(\psi_2, y^*_\sigma(\psi_2))\big\|_{Y_\sigma} \\ &\leq
	\sup_{t \in [\sigma, \infty)} \txte^{\beta (t-\sigma)} \Bigg[
	\left\|V(t-\sigma)(\psi_1 - \psi_2)\right\|_{Y_\sigma} \\ &\quad+  \Big\| \lim\limits_{n \to \infty} \int_{\sigma}^{t} V(t-s) \Pi^{\txts} \Gamma^n \left(R([y^*_\sigma(\psi_1)](s),s)-R([y^*_\sigma(\psi_1)](s),s)\right) ~\txtd s  \Big\|_{Y_\sigma} \\ &\quad+  \Big\| \lim\limits_{n \to \infty} \int_{t}^{\infty} V(t-s) \Pi^{\txtu} \Gamma^n \left( R([y^*_\sigma(\psi_1)](s),s)-R([y^*_\sigma(\psi_1)](s),s) \right) ~\txtd s  \Big\|_{Y_\sigma} \Bigg]\\  
	& \leq \sup_{t \in [\sigma, \infty)} \txte^{\beta (t-\sigma)} C\txte^{-\alpha(t-\sigma)}\|\psi_1 - \psi_2\|_\B + 
	C K L \, \zeta(\delta)\left(\tfrac{1}{\alpha-\beta} +\tfrac{1}{\alpha+\beta}\right)\|y^*_\sigma(\psi_1)y^*_\sigma(\psi_2)\|_{Y_\sigma} \\
	&= ~C\, \|\psi_1 - \psi_2\|_\B +  C K L \, \zeta(\delta)\left(\tfrac{1}{\alpha-\beta} +\tfrac{1}{\alpha+\beta}\right)\|y^*_\sigma(\psi_1)y^*_\sigma(\psi_2)\|_{Y_\sigma} 
\end{align*}
and consequently 
\begin{equation*}
	\|y^*_\sigma(\psi_1) - y^*_\sigma(\psi_2)\|_{Y_\sigma} \leq  C \Big[ 1- C K L \, \zeta(\delta)\left(\tfrac{1}{\alpha-\beta} +\tfrac{1}{\alpha+\beta}\right) \Big]^{-1} \|\psi_1 - \psi_2\|_\B \,.
\end{equation*}
From this we can conclude 
\begin{equation}\label{eq:exponential_decay_of_Y}
	\|[y^*_\sigma(\psi_1)](t) -[ y^*_\sigma(\psi_2)](t)\|_\B \leq \txte^{-\beta (t-\sigma)} C \Big[ 1- C K L \, \zeta(\delta)\left(\tfrac{1}{\alpha-\beta} +\tfrac{1}{\alpha+\beta}\right) \Big]^{-1} \|\psi_1 - \psi_2\|_\B \,,
\end{equation}
which implies Lipschitz continuity of \(w(\psi, \sigma)  = \Pi^u\, [y^*_\sigma(\psi)](\sigma)\) in \(\psi\) upon taking \(t=\sigma\).\bigskip

As the family of Lyapunov-Perron operators \(\mathcal{F}_\sigma\) depends continuously on \(\sigma\), the same holds for \(w\) (one can also view all the operators \(\mathcal{F}_\sigma\) as defined on the same space \(Y_0\) by considering suitably shifted versions). \bigskip

Following a slightly different approach we can also use the implicit function theorem instead of the contraction mapping theorem to show the existence of the stable manifold, very similar to the proof of \cite[Theorem 5]{Mura07}. However, it is hard to find \(t-\)uniform estimates for a suitable neighbourhood using the implicit function theorem; in contrast to our prior strategy via Lyapunov-Perron and the contraction mapping theorem, the implicit function theorem is better suited in an autonomous setting. Yet, it is helpful to study regularity and we will use it here to prove the differentiability at the origin. \medskip

To apply the implicit function theorem, we define \(\G_\sigma \colon  B_{E^{\txts}}(r) \times B_{Y_\sigma}(\delta) \to \tilde{Y}_\sigma\)  by 
\begin{multline*}
\G_\sigma(\psi, y)(t) = y(t)-\F_\sigma(\psi, y) \\ ~~= y(t) - V(t-\sigma)\psi - \lim\limits_{n \to \infty} \int_{\sigma}^{t} V(t-s) \Pi^{\txts} \Gamma^n R(y(s),s) ~\text{ds} + \lim\limits_{n \to \infty} \int_{t}^{\infty} V(t-s) \Pi^{\txtu} \Gamma^n R(y(s),s) ~\txtd s 	
\end{multline*} for all \(t \geq \sigma\). 
Following the proof of \cite[Theorem 5]{Mura07}, adapted to the nonautonomous setting, we obtain by an application of the implicit function theorem  a unique \(C^1\)-mapping \(\Lambda_\sigma: B_{E^{\txts}}(\hat{{r}})  \to B_{Y_\sigma}(\delta)\) with \(\Lambda_\sigma(0)=0\) and \(\G_\sigma(\psi, \Lambda_\sigma(\psi)) =0\) for all  \(\sigma \in [\sigma_0, \infty) \) and all \({\psi \in B_{E^{\txts}}(r)}\), defined in some neighbourhood \(B_{E^{\txts}}(\hat{{r}})\) with an sufficiently small and  \(\sigma\)-dependent \(\hat{r}\) . This means in particular that \[\Lambda_\sigma(\psi)(\sigma) = \psi - \lim\limits_{n \to \infty} \int_{\sigma}^{\infty} V(-s) \Pi^{\txtu} \Gamma^n R(\Lambda_\sigma(\psi)(s),s) ~\txtd s .\]
Defining now \[W(\cdot, \sigma) := \Pi^{\txtu} \,\circ\, \delta_\sigma \,\circ\, \Lambda_\sigma \colon\, B_{E^{\txts}(r)} \to E^{\txtu} , \text{i.e.\,  } W(\psi, \sigma) = - \lim\limits_{n \to \infty} \int_{\sigma}^{\infty} V(-s) \Pi^{\txtu} \Gamma^n R(\Lambda_\sigma(\psi)(s), s) ~\text{ds}\] the stable manifold is also locally given as the graph of \(W\). Due to the uniqueness, \(W\) and \(w\) coincide on their common domain. As a consequence, the function \(w\) that defines the stable manifold is not only continuous but also differentiable at 0. In fact, it is also differentiable in a neighbourhood of 0, but note that this neighbourhood, where one can guarantee differentiability by the implicit function theorem, may shrink to the origin as \(\sigma\) varies.  

Differentiating the relation \(\G_\sigma(\psi, \Lambda_\sigma(\psi)) =0\) we obtain \[ \txtD_\psi\G_\sigma (\psi, \Lambda_\sigma(\psi)) + \txtD_y\G_\sigma(\psi, \Lambda_\sigma(\psi))\txtD \Lambda_\sigma(\psi) =0\,, \] which yields \[ \txtD \Lambda_\sigma(0) = -\txtD_y\G_\sigma(0,0) = V(\cdot)\,. \] From this we can conclude that \(\txtD_\psi W(0, \sigma) = [\Pi^{\txtu} \,\circ\, \delta_\sigma \,\circ\, \txtD \Lambda_\sigma](0) = \Pi^{\txtu} V(0) = \Pi^{\txtu} \), which vanishes when evaluated on \(E^{\txts}\). This shows that the manifold given by the graph of \(W\) is tangential to \(E^{\txts}\) at the origin.\medskip

Next, consider some initial condition \((\phi, \sigma) = (\psi + w(\psi, \sigma), \sigma) \in W_{{\textnormal{loc}}}^{\txts}\) with corresponding projection \(\psi = \Pi^s\phi \in B_{E^s}(r)\). Using estimate \eqref{eq:exponential_decay_of_Y} (and since \(y^*_\sigma(0)(t) = 0\)), the segment \( \mu_t(\phi, \sigma) = [y^*_\sigma(\psi)](t)\) to the initial data \((\phi, \sigma)\) satisfies  
\begin{multline*}
\|\mu_t(\phi, \sigma)\|_\B = \|[y^*_\sigma(\psi)](t)\|_\B \,\leq\, \txte^{-\beta (t-\sigma)} C \Big[ 1- C K L \, \zeta(\delta)\left(\tfrac{1}{\alpha-\beta} +\tfrac{1}{\alpha+\beta}\right) \Big]^{-1} \|\psi\|_\B  \\ \leq\, N  \txte^{-\beta (t-\sigma)} \|\phi\|_\B ~
\end{multline*}
for any \(t\geq \sigma\) with \(N:= CL \Big[ 1- C K L \, \zeta(\delta)\left(\tfrac{1}{\alpha-\beta} +\tfrac{1}{\alpha+\beta}\right) \Big]^{-1}\!,\) proving assertion~\((iii)\). \medskip

Positive invariance of the stable manifold \(W_{{\textnormal{loc}}}^{\txts}\) follows directly from its definition. 

\end{proof}
\medskip

\section{Conclusion and Outlook}
\label{sec:outlook}
In this work, we have provided a blow-up method to desingularize equilibrium points for nonautonomous differential equations in the plane. We have focused on the key challenge to understand the structure of the blown-up desingularized system, which turned out to be a delay equation with infinite delay. For this system, the time reparametrization required us to study regularity properties near the equilibrium to be able to employ Lyapunov-Perron and implicit function techniques to obtain invariant stable/unstable manifolds and their corresponding finite dimensional counterparts. One may interpret our results as establishing a rigorous link between local non-hyperbolic behaviour of nonautonomous systems to hyperbolic behaviour of dynamics with memory. From a geometric viewpoint, stable/unstable manifolds of non-hyperbolic nonautonomous system can now be viewed as projections of families of infinite-dimensional stable/unstable manifolds.\medskip

Having a blow-up method available for nonautonomous dynamics, opens up the opportunity to tackle several problems; we just mention a few possible directions here. (1) It may happen that a single desingularization step does not yield a hyperbolic equilibrium point but produces a collection of new non-hyperbolic equilibria, so that the same procedure may applied again. As Dumortier has shown in \cite{Dum77} a finite number of repeated blowups is sufficient to fully desingularize the dynamics if a planar vector field satisfies a Lojasievicz inequality. Hence, one should also study repeated blow-ups in our case. (2) There is space left for generalization of the method. Besides more general asymptotically autonomous equations, one could also look at the case of different limits in forward and backward time or periodically forced equations, just to name a few examples where Lyapunov exponent can be calculated with little effort. (3) It is often a natural step from nonautonomous systems via skew-product flows to better understand stochastically forced dynamics, where the lack of local hyperbolicity also occurs frequently. Therefore, a major challenge would be to incorporate time-dependent stochastic terms into a blow-up method.\bigskip

\textbf{Acknowledgments:} This work was supported by the German Research Foundation (DFG) via the SFB-TRR109 ``Discretization in Geometry and Dynamics''. CK acknowledges also partial support via a Lichtenberg Professorship of the VolkswagenFoundation. LA acknowledges support from the graduate program TopMath of the Elite Network of Bavaria and the TopMath Graduate Center of TUM Graduate School.

\bibliographystyle{plain}
\bibliography{BibNonautoBlowup}

 \end{document}